\numberwithin{equation}{section}
\DeclareMathOperator{\ord}{ord}
\newtheorem{thm}{Theorem}[section]
\newtheorem{lem}{Lemma}[section]
\newtheorem{conj}{Conjecture}[section]
\newtheorem{exa}{Example}[section]
\newtheorem{cor}{Corollary}[section]
\newtheorem{dfn}{Definition}[section]
\newtheorem{exe}{Exercise}[section]
\newcommand{\N}{\mathbb{N}}
\newcommand{\Z}{\mathbb{Z}}
\newcommand{\Q}{\mathbb{Q}}
\newcommand{\C}{\mathbb{C}}
\newcommand{\tP}{\mathbb{P}}
\title{Simultaneous Primitive Roots over Finite Rings}
\date{}
\author{N. A. Carella}
\begin{document}
	
	\thispagestyle{empty}
	\date{}
	\maketitle
	
	\vskip .25 in 
\begin{abstract}
This note investigates the average density of prime numbers $p\in[x,2x]$ with respect to a random simultaneous primitive root $g\leq p^{1/2+\varepsilon}$ over the finite rings $\Z/p\Z$ and $\Z/p^2\Z$ as $x \to \infty$. \let\thefootnote\relax\footnote{ \today \date{} \\
\textit{AMS MSC}: Primary 11A07, Secondary 11N37. \\
\textit{Keywords}: Prime number; Primitive root; Simultaneous primitive root; Hensel lemma; Artin Primitive Root Conjecture.}
\end{abstract}

\section{Introduction} \label{S2727SPRFR-I}\hypertarget{S2727SPRFR-I}
Let $x>1$ be a large real number, let $p\in[x,2x]$ be a prime number and let $g\geq2$ be a primitive root modulo $p$. A \textit{stationary} primitive root is a simultaneous primitive root $g$ modulo $p$ and $g$ modulo $p^2$, see \hyperlink{S2727SPRFR.200C}{Definition} \ref{dfn2727SPRFR.200C} for more details. An upper bound for the least stationary primitive root and an asymptotic formula for  the density of prime numbers $p\in[x,2x]$ with respect a random stationary primitive root $g_s\geq2$ are investigated here. \\

\begin{thm} \label{thm2727SPRFR.050}\hypertarget{thm2727SPRFR.050} Let $x>1$ be a large real number and let $p\in[x,2x]$ be a prime number. Then, the least stationary primitive root satisfies the inequality
	\begin{equation} \label{eq2727SPRFR.050i}
		g_s(p)\ll p^{1/2+\varepsilon}\log p, 
	\end{equation} 
	where $\varepsilon>0$ is a small number, as $x\to\infty$. 
\end{thm}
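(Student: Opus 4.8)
The plan is to show that the counting function
$$
N(y)=\#\{\,n\le y : n \text{ is a primitive root modulo } p^2\,\}
$$
is strictly positive for $y=c\,p^{1/2+\varepsilon}\log p$ with a suitable constant $c>0$. Since a generator of the cyclic group $(\Z/p^2\Z)^\times$ automatically reduces to a generator of $(\Z/p\Z)^\times$, every primitive root modulo $p^2$ is in particular a simultaneous (stationary) primitive root, so $N(y)>0$ forces $g_s(p)\le y$. First I would record the structural facts: $(\Z/p^2\Z)^\times$ is cyclic of order $\varphi(p^2)=p(p-1)$, and $\varphi(\varphi(p^2))=(p-1)\varphi(p-1)$, so the natural density of primitive roots modulo $p^2$ is $\varphi(p-1)/p\gg 1/\log\log p$. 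Note also that since $1/2+\varepsilon<1$, the window $[1,y]$ contains no multiple of $p$ once $x$ is large, so coprimality to $p$ comes for free.

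Next I would insert the standard detector for generators of a cyclic group. Writing $\chi$ for characters modulo $p^2$,
$$
\mathbb{1}[\,n\text{ p.r. mod }p^2\,]=\frac{\varphi(\varphi(p^2))}{\varphi(p^2)}\sum_{d\mid \varphi(p^2)}\frac{\mu(d)}{\varphi(d)}\sum_{\ord(\chi)=d}\chi(n),
$$
and summing over $n\le y$ isolates the main term from $d=1$, equal to $\tfrac{\varphi(p-1)}{p}\lfloor y\rfloor\gg y/\log\log p$. The whole task is then to bound the contribution of the nontrivial characters by $o(y/\log\log p)$. I would split these characters by conductor: any $\chi$ of order $d$ with $\gcd(d,p)=1$ is trivial on the kernel $\langle 1+p\rangle$ of reduction, hence is induced from a primitive character modulo $p$, and by Pólya--Vinogradov (or Burgess) modulo $p$ its incomplete sum is $\ll\sqrt p\,\log p$; while any $\chi$ with $p\mid\ord(\chi)$ is primitive modulo $p^2$, where Pólya--Vinogradov is useless (it gives only $\ll p\log p$) and Burgess must be used instead.

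The decisive step is the Burgess bound for the cube-free modulus $q=p^2$: for every fixed $r\ge 1$ and every primitive $\chi$ modulo $p^2$,
$$
\Big|\sum_{n\le y}\chi(n)\Big|\ll_{r,\varepsilon} y^{1-1/r}\,(p^2)^{(r+1)/(4r^2)+\varepsilon}=y^{1-1/r}\,p^{(r+1)/(2r^2)+2\varepsilon}.
$$
Substituting $y=c\,p^{1/2+\varepsilon}\log p$, the exponent of $p$ works out to $\tfrac12+\tfrac{1}{2r^2}+O(\varepsilon)$, so choosing $r$ large enough that $1/(2r^2)<\varepsilon/2$ makes each such sum $\ll p^{1/2+\varepsilon/2}(\log p)^{O(1)}$. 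Collecting all the error terms, the weights $\mu(d)/\varphi(d)$ together with the number $\varphi(d)$ of characters of each order combine, leaving a divisor-type factor $2^{\omega(p(p-1))}\ll p^{o(1)}$ times the worst single-character bound; hence the total error is $\ll p^{1/2+\varepsilon/2+o(1)}$, which is $o\!\big(y/\log\log p\big)$. Therefore $N(y)>0$ for all large $x$, establishing \eqref{eq2727SPRFR.050i}.

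I expect the main obstacle to be precisely this last estimate. Pólya--Vinogradov is off by a full factor of $p^{1/2}$ for the characters that are genuinely primitive modulo $p^2$, so the Burgess inequality is essential; and because Burgess only barely beats the main term (both live at the scale $p^{1/2+\varepsilon}$), the balancing of $\varepsilon$, the Burgess parameter $r$, the divisor bound $2^{\omega}$ and the logarithmic factors must be carried out carefully to guarantee a strictly positive count rather than merely a heuristic one. A secondary point worth verifying is the legitimacy of applying Burgess to $q=p^2$ for all $r$, which is admissible because $p^2$ is cube-free.
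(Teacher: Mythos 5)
Your argument is correct in outline, but it takes a genuinely different route from the paper. You fix a single prime $p$, count integers $n\le y$ that are primitive roots modulo $p^2$ directly (using the observation that a generator of $(\Z/p^2\Z)^\times$ automatically reduces to a generator of $(\Z/p\Z)^\times$, so only the modulus $p^2$ needs to be handled), split the nontrivial characters into those induced from modulus $p$ (P\'olya--Vinogradov) and those primitive modulo $p^2$ (Burgess for the cube-free modulus $p^2$ with a large parameter $r$). This is essentially the classical Cohen--Odoni--Stothers-style argument, and your exponent bookkeeping $\tfrac12+\varepsilon(1-\tfrac1r)+\tfrac1{2r^2}<\tfrac12+\varepsilon$ for $r>1/(2\varepsilon)$ is sound; indeed, pushed to its limit this method gives the stronger bound $g_s(p)\ll p^{1/4+\varepsilon}$, as the paper itself remarks after its proof. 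The paper instead writes the candidate primitive root as a sum $g=u+v$ over a product set $U\times V$, inserts the \emph{simultaneous} characteristic function $\tfrac12\Psi_p(u+v)\bigl(1+\Psi_{p^2}(u+v)\bigr)$, sums over all primes $p\in[x,2x]$, and bounds the bilinear character sum $\sum_{u,v}\chi(u+v)\ll N^{1/2}\sqrt{\#U\,\#V}$ by a Gauss-sum/Cauchy--Schwarz argument (its Lemma \ref{lem2727EES.450E}), deriving the bound by contradiction from a nonexistence hypothesis. What your approach buys is a clean statement for each individual prime, no averaging over $p$, and room to improve the exponent to $1/4$; what the paper's approach buys is the complete avoidance of Burgess (only complete Gauss sums are used) and a setup that feeds directly into the density computations of Theorems \ref{thm2727SPRFR.100} and \ref{thm2727SPRFR.900}, at the cost of locating the primitive root only in sumset form $u+v\le 2z$. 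One small point to tighten in your write-up: the paper's working definition of \enquote{stationary} (and the counting functions actually used) is simultaneous primitivity modulo $p$ and $p^2$, so your reduction is legitimate, but you should state explicitly that a primitive root modulo $p^2$ is also one modulo $p^k$ for all $k\ge1$ when invoking the stronger wording of Definition \ref{dfn2727SPRFR.200F}.
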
	

The corresponding counting function for the number of primes $p\in [x,2x]$ for which there is a primitive root $g\in [2,2z]$ modulo $p$, that can be lifted to a primitive roots modulo $p^2$, is defined by
\begin{eqnarray} \label{eq2727SPRFR.050j}
	N_s(x,z)	&=&\#\{ p \in [x,2x]: \ord_{p}g=\varphi(p),\;\ord_{p^2}g=\varphi(p^2) \text{ and } g\leq 2z\,\}\nonumber\\[.3cm]
	&=&	\#\{ \;\Psi_{s} (g): x\leq p\leq 2x \text{ and } g\leq 2z\;\},
\end{eqnarray}	
where  $z=p^{1/2+\varepsilon}\log p$ and $\varepsilon>0$ is a small number.

\begin{thm} \label{thm2727SPRFR.100}\hypertarget{thm2727SPRFR.100} Let $x>1$ be a large real number, let $p\in[x,2x]$ be a prime number and let $z=p^{1/2+\varepsilon}\log p$. Then, on average, a primitive root $g\leq 2z$ modulo $p$ is stationary for a positive proportion of the primes $p\in[x,2x]$. Specifically,
	\begin{equation} \label{eq2727SPRFR.100j}
		\frac{N_s(x,z)}{z^2}=c_2\cdot\frac{x}{\log x}\left( 1+O\left( \frac{1}{\log x}  \right)  \right), 
	\end{equation} 
	where $0<c_2<0.260653  $ is a small constant,	as $x\to\infty$. 
\end{thm}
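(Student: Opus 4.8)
The plan is to detect the stationary primitive roots by Dirichlet characters and to split the count into a short-interval estimate for each fixed prime, followed by an average over $p\in[x,2x]$. First I would record the elementary reduction underlying the Hensel-lifting discussion that precedes Theorem \ref{thm2727SPRFR.050}: for $\gcd(g,p)=1$, the integer $g$ is a primitive root modulo $p^2$ if and only if $g$ is a primitive root modulo $p$ and $g^{p-1}\not\equiv1\pmod{p^2}$. Writing $\psi_p$ for the characteristic function of the primitive roots modulo $p$ and $\lambda_p(g)$ for the indicator of the non-exceptional lift, the counting function becomes
\begin{equation*}
	N_s(x,z)=\sum_{x\leq p\leq 2x}\ \sum_{2\leq g\leq 2z}\psi_p(g)\,\lambda_p(g).
\end{equation*}
The virtue of the range $2z=2p^{1/2+\varepsilon}\log p<p$ is that every admissible $g$ occupies a single residue class modulo $p$, so the mod-$p^2$ constraint is a genuine Wieferich-type condition $g^{p-1}\equiv1\pmod{p^2}$, which is satisfied by only $O(1)$ values of $g$ in the whole interval; thus $\lambda_p$ affects only the error term, and the leading behaviour is governed by $\psi_p$.

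Next I would insert the standard character expansion
\begin{equation*}
	\psi_p(g)=\frac{\varphi(p-1)}{p-1}\sum_{d\mid p-1}\frac{\mu(d)}{\varphi(d)}\sum_{\ord(\chi)=d}\chi(g),
\end{equation*}
valid for $\gcd(g,p)=1$. The term $d=1$ (principal character) gives the main term: the count of $g\in[2,2z]$ weighted by the primitive-root density $\varphi(p-1)/(p-1)$, namely $\tfrac{\varphi(p-1)}{p-1}\,(2z)\,(1+o(1))$. Every nonprincipal contribution is an incomplete character sum $\sum_{g\leq 2z}\chi(g)$, which I would bound by P\'olya--Vinogradov (or by Burgess for a sharper saving) as $\ll p^{1/2}\log p$. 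Summing these against the weights $\mu(d)/\varphi(d)$ and over the $\varphi(d)$ characters of each order $d$ cancels the $\varphi(d)$ factors and leaves an error $\ll \tfrac{\varphi(p-1)}{p-1}\,p^{1/2}\log p\sum_{d\mid p-1}|\mu(d)|$. Since $\sum_{d\mid p-1}|\mu(d)|=2^{\omega(p-1)}=p^{o(1)}$ and the main term has size $\asymp \tfrac{\varphi(p-1)}{p-1}\,p^{1/2+\varepsilon}\log p$, the error is smaller than the main term by a factor $p^{-\varepsilon+o(1)}$, hence negligible once $\varepsilon$ is fixed.

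With the per-prime count in hand I would sum over $p\in[x,2x]$ and interchange the order of summation. The resulting average is controlled by the mean value
\begin{equation*}
	\sum_{x\leq p\leq 2x}\frac{\varphi(p-1)}{p-1}=A\,\bigl(\pi(2x)-\pi(x)\bigr)\left(1+O\!\left(\frac{1}{\log x}\right)\right),
\end{equation*}
where $A=\prod_{q}\bigl(1-\tfrac{1}{q(q-1)}\bigr)$ is Artin's constant; this is available unconditionally by sieve and mean-value methods. The exceptional Wieferich lifts enter only through $\lambda_p$ and, by the $O(1)$ bound above, contribute solely to the error. Carrying out the division by $z^2$ prescribed in \eqref{eq2727SPRFR.100j} and inserting $\pi(2x)-\pi(x)=\tfrac{x}{\log x}\bigl(1+O(1/\log x)\bigr)$ then produces the asserted asymptotic, with $c_2$ an explicit Artin-type Euler product whose numerical evaluation gives $c_2<0.260653$.

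The main obstacle is uniformity of the error analysis across the entire family: the P\'olya--Vinogradov or Burgess bounds must be applied with complete uniformity in $p$ and in the characters $\chi\pmod p$, and the divisor factor $2^{\omega(p-1)}$ must be absorbed simultaneously with the interchange of the $g$- and $p$-summations so as not to spoil the final $O(1/\log x)$ relative error. A secondary technical point is to verify rigorously, uniformly for $p\in[x,2x]$, that the solutions of $g^{p-1}\equiv1\pmod{p^2}$ with $g\leq 2z$ number $O(1)$, so that the Wieferich-type exceptional set is confined to the error term and never perturbs the leading constant $c_2$.
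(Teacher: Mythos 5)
Your detection mechanism is genuinely different from the paper's and, as set up, it cannot produce the stated formula. The paper does not sum the indicator over single integers $g\leq 2z$: it writes $g=u+v$ with $(u,v)$ ranging over a product set $U\times V$ with $\#U=\#V=z$, so that $N_s(x,z)$ is a sum over $z^2$ pairs, the main term has size $\asymp z^2x/\log x$, and the incomplete character sums $\sum_{g\leq 2z}\chi(g)$ are replaced by the bilinear sums $\sum_{u\in U,v\in V}\chi(u+v)\ll p^{1/2}\sqrt{\#U\cdot\#V}$ of Lemma \ref{lem2727EES.450E}. Your single sum over $g\leq 2z$ contains at most $2z\,\pi(2x)\ll zx/\log x$ terms, so after the division by $z^2$ prescribed in \eqref{eq2727SPRFR.100j} you would obtain a quantity of order $x/(z\log x)$, which is smaller than the asserted right-hand side $c_2x/\log x$ by a factor of $z$. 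The $z^2$ normalization is only explicable through the $U\times V$ parametrization; without it your argument proves a differently normalized statement, not the one in the theorem.

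There is also a mismatch in the leading constant. Because you treat the mod-$p^2$ condition as a Wieferich-type exceptional set that only perturbs the error term, your main term carries the single weight $\varphi(p-1)/(p-1)$ and averages to Artin's constant $a_1=0.3739\ldots$. The theorem's constant is $c_2=(a_1+a_2)/2<0.260653$, where $a_2=\prod_{p}\bigl(1-\tfrac{2p-1}{p^2(p-1)}\bigr)$; it arises from the paper's characteristic function $\Psi_s=\tfrac12\Psi_p\left(1+\Psi_{p^2}\right)$ together with Lemma \ref{lem4040DP.200A} applied with $k=1$ and $k=2$. These two numbers do not agree, so even after repairing the normalization your route lands on a different constant from the one claimed. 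Finally, your assertion that $g^{p-1}\equiv1\pmod{p^2}$ has only $O(1)$ solutions $g\leq 2z$ is not a known result: the $(p-1)$-th roots of unity form a subgroup of index $p$ in $\left(\Z/p^2\Z\right)^{\times}$, and the best unconditional bound for its intersection with $[1,2z]$, obtained from Burgess-type estimates for characters to the modulus $p^2$, is $o(z)$ rather than $O(1)$; that weaker bound would still confine the exceptional set to the error term, but the $O(1)$ claim as written is unsupported.
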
	

A primitive root $g\in \Z/p\Z$ that fails to be a primitive root $g\in \Z/p^2\Z$ is called a \textit{nonstationary primitive root}. In this case the corresponding counting function for the number of primes $p\in [x,2x]$ for which there is a primitive root $g\in [2,2z]$ modulo $p$, that cannot be lifted to a primitive roots modulo $p^2$, is defined by
\begin{eqnarray} \label{eq2727SPRFR.050ia}
	N_n(x,z)	&=&\#\{ p \in [x,2x]: \ord_{p}g=\varphi(p),\;\ord_{p^2}g\ne\varphi(p^2) \text{ and } g\leq 2z\,\}\nonumber\\[.3cm]
	&=&	\#\{ \;\Psi_{n} (g): x\leq p\leq 2x \text{ and } g\leq 2z\;\}.
\end{eqnarray}	 
\begin{thm} \label{thm2727SPRFR.100N}\hypertarget{thm2727SPRFR.100N} Let $x>1$ be a large real number, let $p\in[x,2x]$ be a prime number and let $z=p^{1/2+\varepsilon}\log p$. Then, on average, a primitive root $g\leq 2z$ modulo $p$ is nonstationary for a positive proportion of the primes $p\in[x,2x]$. Specifically,
	\begin{equation} \label{eq2727SPRFR.100ib}
		\frac{N_n(x,z)}{z^2}=c_3\cdot\frac{x}{\log x}\left( 1+O\left( \frac{1}{\log x}  \right)  \right), 
	\end{equation} 
	where $0<c_3 <0.113302$ is a small constant,	as $x\to\infty$. 
\end{thm}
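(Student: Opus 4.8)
The plan is to reduce the nonstationary count to two primitivity counts that are already essentially under control. The key structural observation is that every primitive root modulo $p^2$ is automatically a primitive root modulo $p$, so the nonstationary event is exactly the \emph{difference} of the two events ``$g$ is a primitive root modulo $p$'' and ``$g$ is a primitive root modulo $p^2$''. At the level of indicator functions,
\[
\mathbb{1}\!\left[\ord_p g=\varphi(p)\right]-\mathbb{1}\!\left[\ord_{p^2}g=\varphi(p^2)\right]=\mathbb{1}\!\left[\ord_p g=\varphi(p),\ \ord_{p^2}g\ne\varphi(p^2)\right].
\]
Summing over $g\le 2z$ and $p\in[x,2x]$ yields the identity $N_n(x,z)=N_{pr}(x,z)-N_s(x,z)$, where $N_{pr}$ is the total count of primitive roots modulo $p$ and $N_s$ is the stationary count of Theorem \ref{thm2727SPRFR.100}. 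Hence it suffices to produce an asymptotic for $N_{pr}(x,z)$ in the same $z^2$-normalization and subtract the known stationary term.

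To evaluate $N_{pr}(x,z)$ I would reuse the analytic machinery behind Theorem \ref{thm2727SPRFR.100}. Expanding the primitive-root indicator modulo $p$ over the Dirichlet characters $\chi\bmod p$, one writes $\mathbb{1}[\ord_p g=\varphi(p)]$ as the density $\varphi(p-1)/(p-1)$ plus a weighted sum of $\chi(g)$ over the nonprincipal characters whose orders divide $p-1$. The principal term, summed over $g\le 2z$ and $p\in[x,2x]$, gives the main term: the inner sum over $g$ contributes the interval length, the average of $\varphi(p-1)/(p-1)$ over $p$ supplies the Artin-type constant, and in the paper's normalization by $z^2$ this produces $(c_2+c_3)\,x/\log x$ with relative error $O(1/\log x)$. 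The nonprincipal characters are controlled by the P\'olya--Vinogradov bound $\sum_{g\le 2z}\chi(g)\ll p^{1/2}\log p$; since $2z=2p^{1/2+\varepsilon}\log p$ this estimate beats the trivial size of the interval, and after summing over the $O\!\left(2^{\omega(p-1)}\right)$ relevant characters and over $p$ the contribution is absorbed into the error. This is exactly where the threshold $z=p^{1/2+\varepsilon}\log p$ is forced.

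Subtracting then gives $N_n/z^2=(c_2+c_3)\,x/\log x-c_2\,x/\log x=c_3\,x/\log x\bigl(1+O(1/\log x)\bigr)$, and since $c_2+c_3$ is the Artin-type constant $A=\prod_q\bigl(1-1/(q(q-1))\bigr)\approx 0.373956$ while $c_2<0.260653$, one reads off $c_3<0.113302$. Because $c_3$ is a fixed positive proportion of $c_2$ rather than a quantity of size $1/p$, the subtraction involves no catastrophic cancellation and the difference retains a genuine main term. I expect the main obstacle to be the uniform error control in the second step: the nonprincipal-character sums must be shown to be negligible against the main term simultaneously for \emph{every} prime in $[x,2x]$ and across the whole short interval $g\le 2z$, and it is the interplay between the character count $2^{\omega(p-1)}$ and the P\'olya--Vinogradov saving that dictates both the exponent $1/2+\varepsilon$ and the logarithmic factor in $z$. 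A secondary technical point is pinning down the constant, namely justifying the convergence of the Artin-type Euler product and evaluating the local factor at the prime $p$ that separates the stationary from the nonstationary mass, so that $c_3=A-c_2$ has the claimed numerical value.
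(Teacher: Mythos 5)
Your route is sound but genuinely different from the paper's. The paper does not subtract: it builds the nonstationary indicator $\Psi_n(g)=\tfrac12\Psi_p(g)\bigl(1-\Psi_{p^2}(g)\bigr)$ (Lemma \ref{lem2727SPRFR.100-C8}), expands both factors in multiplicative characters exactly as in the proof of Theorem \ref{thm2727SPRFR.100}, and re-runs the same main-term/error-term analysis; the only change is the sign in front of the cross term, which turns the main-term constant from $(a_1+a_2)/2=c_2$ into $(a_1-a_2)/2=c_3$ via Lemma \ref{lem4040DP.200A} with $k=1$ and $k=2$. Your decomposition $N_n=N_{pr}-N_s$ is more economical, since $\Psi_s+\Psi_n=\Psi_p$ means the cross-term error cancels identically and you only need the single-modulus count $N_{pr}$, which is classical Stephens-type averaging. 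Two caveats. First, a bookkeeping mismatch: your displayed pointwise identity (indicator mod $p$ minus indicator mod $p^2$ equals the nonstationary indicator) sums to $N_{pr}-N_{ss}$, where $N_{ss}$ is the honest simultaneous count with density constant $a_1-a_2=2c_3$; the theorem's constant $c_3$ only emerges because the paper's $N_s$ is itself the halved quantity $\tfrac12(N_{pr}+N_{ss})$, so your identity $N_n=N_{pr}-N_s$ is a different statement from the displayed one and is the one you must use --- reconcile the two or you will be off by a factor of $2$. Second, the paper's counts are not sums over an interval $g\le 2z$ but double sums over $g=u+v$ with $(u,v)\in U\times V$ (hence the $z^2$ normalization), and its error term uses the Gauss-sum bound $\sum_{u,v}\chi(u+v)\ll p^{1/2}\sqrt{\#U\,\#V}$ of Lemma \ref{lem2727EES.450E} rather than P\'olya--Vinogradov; your interval version with P\'olya--Vinogradov proves a slightly different (arguably more natural) statement with normalization $z$ rather than $z^2$, so you should either adopt the sumset formulation or note explicitly that you are changing the counting convention.
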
	
These results are closely related to the problem discussed in \cite{PA2009} regarding the density of prime numbers with the same stationary least primitive root. In particular, if $g(p)$ and $h(p)$ denote the least primitive roots modulo $p$ and modulo $p^2$ respectively, then the asymptotic formulas
\begin{equation} \label{eq2727SPRFR.100s}
	N_L(x,z)	=\#\{ \;p \in [x,2x]: g(p)=h(p) \text{ and } g\leq 2z\,\}
\end{equation}	
and 
\begin{equation} \label{eq2727SPRFR.100t}
	\overline{N_L}(x,z)	=\#\{ \;p \in [x,2x]: g(p)\ne h(p) \text{ and } g\leq 2z\,\}
\end{equation}	
or for any specific stationary primitive root $g$ remain as open problems.\\

A survey on various results and the history of the expansions of the rational numbers $1/n\in \Q$ appears in \cite{BM2009}. The previous result has an application to this repeated decimal problems. 

\begin{cor} \label{cor2727SPRFR.100}\hypertarget{cor2727SPRFR.100} Let $x>1$ be a large real number, let $p\in[x,2x]$ be a prime number and let $\ell\leq p^{1/2+\varepsilon}\log p$ be an integer $\ne\pm1, v^2$. Then, the followings statements hold.

\begin{enumerate}[font=\normalfont, label=(\roman*)]
\item On average, the $\ell$-adic expansion of the fraction 
\begin{equation} \label{eq2727SPRFR.100t2}
1/p^2=0.\overline{a_1a_2a_3\cdots a_t}
\end{equation}	
has maximal period $t=p(p-1)$ for a positive proportion $c_2>0$ of the primes $p\in[x,2x]$ as $x\to\infty$. 
\item On average, the $\ell$-adic expansion of the fraction 
\begin{equation} \label{eq2727SPRFR.100t3}
1/p^k=0.\overline{a_1a_2a_3\cdots a_t}
\end{equation}	 
has maximal period $t=p^{k-1}(p-1)$ for a positive proportion $c_2>0$ of the primes $p\in[x,2x]$ for all $k\geq2$ as $x\to\infty$.
\end{enumerate}
\end{cor}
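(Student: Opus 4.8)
The plan is to reduce both parts to Theorem~\ref{thm2727SPRFR.100} by way of the classical dictionary between periods of radix expansions and multiplicative orders. First I would record the standard fact that for an integer base $\ell\geq2$ with $\gcd(\ell,m)=1$, the $\ell$-adic expansion of $1/m$ is purely periodic and its period length equals the multiplicative order $\ord_m\ell$. Applying this with $m=p^k$, and noting that $\ell\leq p^{1/2+\varepsilon}\log p<p$ for all sufficiently large $p$ (so that $\gcd(\ell,p^k)=1$ is automatic once $\ell\neq\pm1$), the period of the expansion in \eqref{eq2727SPRFR.100t3} equals $\ord_{p^k}\ell$. This order attains its maximal admissible value $\varphi(p^k)=p^{k-1}(p-1)$ precisely when $\ell$ is a primitive root modulo $p^k$, and part (i) is then the special case $k=2$ of \eqref{eq2727SPRFR.100t2}, where the maximum is $\varphi(p^2)=p(p-1)$.

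Next I would invoke the Hensel-type lifting principle, which is the structural heart of the reduction: if $\ell$ is a primitive root modulo $p$ and $\ell^{p-1}\not\equiv1\pmod{p^2}$ — equivalently, if $\ell$ is simultaneously a primitive root modulo $p$ and modulo $p^2$, that is, a stationary primitive root in the sense of Definition~\ref{dfn2727SPRFR.200C} — then $\ell$ is automatically a primitive root modulo $p^k$ for \emph{every} $k\geq2$. This is the point at which the hypotheses $\ell\neq\pm1,v^2$ enter: the values $\ell=\pm1$ give degenerate expansions with $\ord_{p^k}\ell\in\{1,2\}$, while a perfect square $\ell=v^2$ can never be a primitive root modulo an odd prime, since then $\ord_p\ell$ divides $(p-1)/2$. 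Discarding these values leaves exactly the bases eligible to be stationary primitive roots, and for any such $\ell$ the maximal period $p^{k-1}(p-1)$ is forced simultaneously for all $k\geq2$ by a single stationarity condition at the level $p^2$.

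Finally I would transfer the averaged density furnished by Theorem~\ref{thm2727SPRFR.100}. That theorem asserts that, on average over the admissible bases $\ell\leq2z$ with $z=p^{1/2+\varepsilon}\log p$, a primitive root modulo $p$ is stationary for a positive proportion $c_2>0$ of the primes $p\in[x,2x]$. Combining this averaged statement with the two steps above, for this same proportion of primes the order $\ord_{p^k}\ell$ equals $\varphi(p^k)$ for all $k\geq2$, and hence the $\ell$-adic expansions of $1/p^2$ and of $1/p^k$ attain their maximal periods $p(p-1)$ and $p^{k-1}(p-1)$, respectively. This yields (i) and (ii) with the same constant $c_2$.

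The main obstacle I anticipate is not conceptual but lies in the bookkeeping of the final transfer. The conclusion of Theorem~\ref{thm2727SPRFR.100} is phrased through the normalized pair-count $N_s(x,z)/z^2$, so some care is needed to pass from that normalized average to the per-base ``on average'' assertion about a given $\ell$. In particular I must verify that removing the excluded bases does not erode the constant $c_2$: the values $\ell=\pm1$ contribute $O(1)$, while the perfect squares $v^2\leq2z$ number only $O(z^{1/2})$ and therefore have density $O(z^{-1/2})$ among the $\asymp z$ admissible bases. Since this excluded set is of density zero in the relevant range, the positive proportion $c_2$ survives intact in the expansions of $1/p^k$, which completes the argument.
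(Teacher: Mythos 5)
Your proposal is correct and follows essentially the same route the paper intends: the paper gives no written proof of this corollary, merely declaring it a consequence of Theorem \ref{thm2727SPRFR.900} together with the remark at the start of Section \ref{S2727SPRFR-DP} that a simultaneous primitive root modulo $p$ and $p^2$ is automatically a primitive root modulo $p^k$ for all $k\geq1$, which is exactly the period--order dictionary plus Hensel-type lifting you use. Your version is in fact more complete than the paper's, since you also address the transfer from the pair-averaged count $N_s(x,z)/z^2$ to the per-base statement and the negligibility of the excluded bases $\ell=\pm1,v^2$.
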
	

The first few sections covers the notation, definitions and basic foundation. The proof of  
\hyperlink{thm2727SPRFR.050}{Theorem} \ref{thm2727SPRFR.050} appears in \hypertarget{S2727SPRFR-SPR}{Section} \ref{S2727SPRFR-SPR}. The proof of \hyperlink{thm2727SPRFR.100}{Theorem} \ref{thm2727SPRFR.100} appears in \hypertarget{S2727SPRFR-DP}{Section} \ref{S2727SPRFR-DP}. Except for the characteristic function, compare \hyperlink{lem2727SPRFR.100-C4}{Lemma} \ref{lem2727SPRFR.100-C4} and \hyperlink{lem2727SPRFR.100-C8}{Lemma} \ref{lem2727SPRFR.100-C8}, the proof of  
\hyperlink{thm2727SPRFR.100N}{Theorem} \ref{thm2727SPRFR.100N} is essentially the same as the proof of \hyperlink{thm2727SPRFR.100}{Theorem} \ref{thm2727SPRFR.100}, it is omitted. The proof of  
\hyperlink{cor2727SPRFR.100}{Corollary} \ref{cor2727SPRFR.100} is a corollary of \hyperlink{thm2727SPRFR.900}{Theorem} \ref{thm2727SPRFR.900}, which is completed in \hypertarget{S2727SPRFR-DP}{Section} \ref{S2727SPRFR-DP}.\\

Let $\tP=\{2,3,5,\ldots\}$ be the set of primes, let $\pi(x)=\#\{\,p\leq x: p \in \tP\,\}$ and let $g\ne\pm1,v^2$ be an integer. The above result provides the density on average, it is expected that this can be conditionally proven, using similar analytic methods as in the case $k=1$, see \cite{HC1967}, for any fixed primitive root $g\ne\pm1,v^2$. But the specific density 
\begin{equation} \label{eq2727SPRFR.100i}
	\delta_s(g)	=\lim_{x\to\infty}\frac{\#\{\, p \in \tP: \ord_{p}g=\varphi(p),\;\ord_{p^2}g=\varphi(p^2)\,\}}{\pi(x)}
\end{equation}	
of prime numbers $p\geq2$ with respect to a fixed integer $g\ne\pm1, v^2$ remains as an open problem in algebraic number theory, this is related to the simpler case for $\Z/p\Z$, see \cite{SP2003}, \cite{LM2014}.

\section{Notation, Definitions and Basic Concepts} \label{S2727SPRFR-N}\hypertarget{S2727SPRFR-N}
The sets $\N=\{0,1,2,3,\ldots\}$ and $\Z=\{\ldots, -3,-2,-1, 0,1,2,3,\ldots\}$ denote the sets of natural numbers and the sets of integers respectively.

\begin{dfn}\label{dfn2727SPRFR.200C}\hypertarget{S2727SPRFR.200C} {\normalfont Let $ p\geq3$ be a prime and let $\Z/p^k\Z$ be the finite ring of integers modulo $p^k$. The multiplicative order of an element $u\in \Z/p^k\Z$ is defined by
$$\ord_{p^k}u=\min\{n:u^n\equiv 1\bmod p^k\}.$$		
An element of maximal order $\ord_{p^k}u=\varphi(p^k)$ is called a \textit{primitive root} modulo  $p^k$.}
\end{dfn}

\begin{dfn}\label{dfn2727SPRFR.200F}\hypertarget{S2727SPRFR.200F} {\normalfont Let $ p\geq3$ be a prime. A primitive root in the finite ring $\Z/p\Z$ is called \textit{stationary} if it remains a primitive root in every finite ring its multiplicative order $\Z/p^k\Z$ and $\Z/2p^k\Z$ independently of $k\geq1$. Otherwise, it is called a \textit{nonstationary} primitive root.}
\end{dfn}

\begin{lem}\label{lem9696.500FR-FE}\hypertarget{lem9696.500FR-FE}  {\normalfont (Fermat-Euler)} If \(a\in \mathbb{Z}\) is an integer such that \(\gcd (a,n)=1,\) then \(a^{\varphi (n)}\equiv
	1 \bmod n\).
\end{lem}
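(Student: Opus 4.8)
The plan is to use the classical counting argument built on a reduced residue system modulo $n$, since it is self-contained and requires no group-theoretic prerequisites. Let $r_1, r_2, \ldots, r_{\varphi(n)}$ denote the $\varphi(n)$ residue classes modulo $n$ that are coprime to $n$; by the definition of Euler's totient there are exactly $\varphi(n)$ of them. First I would verify that multiplication by $a$ acts as a permutation of this set. Because $\gcd(a,n)=1$, each product $a r_i$ again satisfies $\gcd(a r_i, n)=1$, so it is congruent modulo $n$ to some element of the reduced residue system. Moreover the map $r_i \mapsto a r_i \bmod n$ is injective: if $a r_i \equiv a r_j \bmod n$, then $n \mid a(r_i - r_j)$, and since $\gcd(a,n)=1$ this forces $n \mid r_i - r_j$, so $r_i$ and $r_j$ are the same residue class. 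An injective self-map of a finite set is a bijection, hence $\{a r_1, \ldots, a r_{\varphi(n)}\}$ is just a permutation of $\{r_1, \ldots, r_{\varphi(n)}\}$ modulo $n$.

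Next I would compare the two products. Taking the product over all elements and using that each list represents the same set modulo $n$,
$$\prod_{i=1}^{\varphi(n)} (a r_i) \equiv \prod_{i=1}^{\varphi(n)} r_i \pmod n.$$
Factoring out the $\varphi(n)$ copies of $a$ on the left gives
$$a^{\varphi(n)} \prod_{i=1}^{\varphi(n)} r_i \equiv \prod_{i=1}^{\varphi(n)} r_i \pmod n.$$
The concluding step is cancellation: since each $r_i$ is coprime to $n$, the product $R = \prod_{i=1}^{\varphi(n)} r_i$ is coprime to $n$, hence invertible modulo $n$. Multiplying both sides by $R^{-1}$ yields $a^{\varphi(n)} \equiv 1 \bmod n$, as asserted.

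The only subtlety — the mild "main obstacle" — is the justification that multiplication by $a$ induces a bijection of the reduced residue system and that the product $R$ may legitimately be cancelled; both facts rest entirely on the hypothesis $\gcd(a,n)=1$, which supplies an inverse of $a$ modulo $n$. As an alternative one could phrase the entire argument in group-theoretic language: the reduced residues form the unit group $(\Z/n\Z)^\times$ of order $\varphi(n)$, the class of $a$ belongs to this group, and Lagrange's theorem then shows that the order of $a$ divides $\varphi(n)$, whence $a^{\varphi(n)} \equiv 1 \bmod n$ at once. I would present the elementary permutation argument as the primary proof, noting the group-theoretic route as a one-line corollary of Lagrange's theorem for readers who prefer it.
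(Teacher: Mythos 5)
Your proof is correct and complete: the permutation argument on the reduced residue system, followed by cancellation of the invertible product $R=\prod_i r_i$, is the standard and fully rigorous derivation of the Fermat--Euler theorem, and your Lagrange-theorem alternative is equally valid. Note that the paper itself states this lemma without any proof, treating it as classical background, so there is no argument in the source to compare against; your write-up supplies exactly the justification the paper omits.
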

\begin{lem} \label{lem9696.500FR}\hypertarget{lem9696.500FR}  {\normalfont (Primitive root test)} An integer $u \in \Z$ is a primitive root modulo an integer $n \in \N$ if and only if 
	\begin{equation}\label{eq9696.152}
		u^{\varphi (n)/p} -1\not \equiv 0 \mod  n
	\end{equation}
	for all prime divisors $p \mid \varphi (n)$.
\end{lem}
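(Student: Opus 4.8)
The plan is to prove both implications by analyzing the multiplicative order $d=\ord_n u$. First I would record that the statement is only meaningful when $\gcd(u,n)=1$: if $u$ shares a factor with $n$ then $u$ is not a unit, it cannot be a primitive root, and no power $u^m$ with $m\geq1$ is congruent to $1$ modulo $n$, so the congruence in \eqref{eq9696.152} would hold vacuously. I therefore assume $\gcd(u,n)=1$ throughout (folding it into the hypothesis). Under this assumption, \hyperlink{lem9696.500FR-FE}{Lemma} \ref{lem9696.500FR-FE} (Fermat--Euler) gives $u^{\varphi(n)}\equiv1\bmod n$, and the standard minimality argument shows that $d=\ord_n u$ divides $\varphi(n)$.

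For the forward direction (necessity), suppose $u$ is a primitive root, so that $d=\varphi(n)$ by \hyperlink{S2727SPRFR.200C}{Definition} \ref{dfn2727SPRFR.200C}. Fix any prime $p\mid\varphi(n)$. Then $0<\varphi(n)/p<\varphi(n)=d$, so the exponent $\varphi(n)/p$ is a positive integer strictly smaller than the order of $u$; consequently $u^{\varphi(n)/p}\not\equiv1\bmod n$, which is exactly \eqref{eq9696.152}. Equivalently, $u^{\varphi(n)/p}\equiv1$ would force $d\mid\varphi(n)/p$, contradicting $d=\varphi(n)$.

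For the backward direction (sufficiency) I would argue by contraposition. Suppose $u$ is not a primitive root, i.e. $d<\varphi(n)$ while $d\mid\varphi(n)$. Then $\varphi(n)/d$ is an integer exceeding $1$, hence divisible by some prime $p$; writing $\varphi(n)/d=p\,m$ gives $\varphi(n)/p=d\,m$, so $d\mid\varphi(n)/p$. Therefore $u^{\varphi(n)/p}=(u^{d})^{m}\equiv1\bmod n$, and \eqref{eq9696.152} fails for this particular prime $p$. Contrapositively, if \eqref{eq9696.152} holds for every prime $p\mid\varphi(n)$, then $d=\varphi(n)$ and $u$ is a primitive root.

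The argument is entirely elementary and presents no genuine obstacle; the only point requiring care is the coprimality hypothesis noted above, since without it the congruence in \eqref{eq9696.152} can fail to detect a non-unit $u$. The substantive content is the single observation that every proper divisor $d$ of $\varphi(n)$ divides $\varphi(n)/p$ for some prime $p\mid\varphi(n)$, which is what converts the global condition $d=\varphi(n)$ into the finite list of checks indexed by the primes dividing $\varphi(n)$.
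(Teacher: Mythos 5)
Your proof is correct. Note, however, that the paper itself offers no proof of this lemma at all: it simply cites Lucas \cite{LE1878} and Crandall--Pomerance \cite{CP2005} and moves on, so there is nothing internal to compare your argument against. What you supply is the standard self-contained order-theoretic argument, and both directions are sound: necessity follows because $\varphi(n)/p$ is a positive integer strictly below the order $\varphi(n)$, and sufficiency follows from the contrapositive observation that any proper divisor $d$ of $\varphi(n)$ divides $\varphi(n)/p$ for some prime $p\mid\varphi(n)$. Your remark about coprimality is a genuine improvement on the paper's statement: as written, the lemma is literally false for $\gcd(u,n)>1$, since then no power $u^m$ with $m\geq1$ is $\equiv 1\bmod n$, the displayed condition holds vacuously, and yet $u$ is not a primitive root; folding $\gcd(u,n)=1$ into the hypothesis (as the paper implicitly does elsewhere by working in $(\Z/p^k\Z)^{\times}$) is the right fix. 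One could also note the degenerate case where $\varphi(n)=1$ has no prime divisors, but that is harmless. In short, your proof fills a gap the paper leaves to the references, and does so correctly.
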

The primitive root test is a special case of the Lucas primality test, introduced in {\color{red}\cite[p.\ 302]{ LE1878}}. A more recent version appears in {\color{red}\cite[Theorem 4.1.1]{CP2005}}, and similar sources. 
\section{Lifting Results} \label{S9977FRL}\hypertarget{S9977FRL}
A simple primitive root lifting procedure $\Z/p\Z\longrightarrow \Z/p^k\Z$, with $ k\geq2$, is also applicable to the larger finite ring $\Z/2p^k\Z$. The version for the larger finite ring is presented here.

\begin{lem}\label{lem9977FRL.400G}\hypertarget{lem9977FRL.400G} {\normalfont (Primitive root lift test)} A nonzero integer $u \in \Z/2p^k\Z$ is a primitive root modulo $2p^k$ if and only if it is not a $(p-1)$th root of unity in $\left( \Z/2p^k\Z\right)^{\times}$. Specifically,
	\begin{equation}\label{eq9955FRL.400Gc}
		u^{p-1}\not \equiv 1 \mod  2p^k\nonumber.
	\end{equation}
\end{lem}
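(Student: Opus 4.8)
The plan is to transport the whole question from the ring $\Z/2p^k\Z$ to the odd prime power $\Z/p^k\Z$ and then to run the primitive root test of Lemma \ref{lem9696.500FR}. First I would invoke the Chinese Remainder Theorem to write $\Z/2p^k\Z \cong \Z/2\Z \times \Z/p^k\Z$, so that $(\Z/2p^k\Z)^{\times} \cong (\Z/p^k\Z)^{\times}$ is cyclic of order $\varphi(2p^k) = \varphi(2)\varphi(p^k) = p^{k-1}(p-1)$. Because every unit $u$ is odd, one has $u^n \equiv 1 \bmod 2$ for all $n$, hence $u^n \equiv 1 \bmod 2p^k$ if and only if $u^n \equiv 1 \bmod p^k$; this shows that the factor $2$ is carried along harmlessly and that all order computations may be done in $\Z/p^k\Z$. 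Consistent with the lifting framing of this section, I would also make explicit the standing hypothesis that $u$ reduces to a primitive root modulo $p$, i.e. $\ord_p u = p-1$, which is what makes the $(p-1)$th root of unity criterion decisive.

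Next I would apply Lemma \ref{lem9696.500FR}: $u$ is a primitive root modulo $2p^k$ exactly when $u^{\varphi(2p^k)/q} \not\equiv 1 \bmod 2p^k$ for every prime $q \mid p^{k-1}(p-1)$. These primes fall into two families, the primes $q \mid (p-1)$ and the prime $p$ itself (which appears once $k \geq 2$). For the family $q \mid (p-1)$ I would reduce modulo $p$: since $p \equiv 1 \bmod (p-1)$ gives $p^{k-1} \equiv 1 \bmod (p-1)$, the exponent $\varphi(2p^k)/q = p^{k-1}(p-1)/q$ is congruent to $(p-1)/q$ modulo $p-1$, so that $u^{\varphi(2p^k)/q} \equiv u^{(p-1)/q} \bmod p$. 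Under the standing hypothesis $\ord_p u = p-1$ this is never $1$, so the entire family is automatically satisfied. This is precisely the content of the phrase that $u$ is not a $(p-1)$th root of unity: the only way for a unit to fail primitivity through a prime dividing $p-1$ is already excluded by its being a primitive root modulo $p$.

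The decisive obstruction is therefore the single prime $q = p$, where the test reads $u^{p^{k-2}(p-1)} \not\equiv 1 \bmod p^k$. Here I would switch to a $p$-adic, Hensel-type lifting argument driven by the Fermat--Euler congruence of Lemma \ref{lem9696.500FR-FE}. Writing $u^{p-1} = 1 + c\,p^{m}$ with $p \nmid c$ and $m \geq 1$, the binomial expansion yields $(u^{p-1})^{p^{j}} \equiv 1 + c\,p^{m+j} \bmod p^{m+j+1}$, so that the exact power of $p$ dividing $(u^{p-1})^{p^{j}} - 1$ is $m+j$. Tracking this valuation shows that the $p$-part of $\ord_{2p^k} u$ equals $p^{k-m}$ when $m \leq k$, so $u$ attains the full order $p^{k-1}(p-1)$ if and only if $m = 1$, that is, if and only if $u^{p-1} \not\equiv 1 \bmod p^2$. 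For $k = 2$ this is exactly the displayed criterion $u^{p-1} \not\equiv 1 \bmod 2p^k$, and the same normalized condition $m = 1$ then propagates to every $k \geq 2$, giving the stated equivalence.

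The main obstacle I anticipate is this $p$-adic lifting step, and in particular bookkeeping the exact valuation $v_p(u^{p-1} - 1)$ rather than a mere noncongruence. The delicate point is that for $k \geq 3$ the naive test exponent $p^{k-2}(p-1)$ exceeds $p-1$, so the literal condition $u^{p-1} \not\equiv 1 \bmod p^k$ is strictly weaker than $m = 1$; one must verify that it is the normalized condition modulo $p^2$, equivalent to the displayed condition precisely in the case $k = 2$ of primary interest for stationarity, that actually governs primitivity, and that once $u$ is a primitive root modulo $p^2$ it remains one modulo $p^k$ for all $k$. Establishing this stability, via the monotone growth of $v_p((u^{p-1})^{p^j} - 1)$, is the only place where anything beyond the cyclic structure and Lemma \ref{lem9696.500FR} is needed.
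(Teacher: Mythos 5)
Your proposal follows the same skeleton as the paper's proof (reduce to the cyclic group of order $p^{k-1}(p-1)$, apply the primitive root test of Lemma \ref{lem9696.500FR}, and split the prime divisors into the family $q \mid p-1$ and the single prime $q = p$), and your treatment of the family $q \mid p-1$ under the standing hypothesis $\ord_p u = p-1$ matches the paper's. The difference is that the paper's proof stops there: it disposes of the primes $q \mid p-1$ and never addresses the case $q = p$, which is the entire content of the lemma, since the condition $u^{p-1} \not\equiv 1 \bmod 2p^k$ is precisely what controls the $p$-power part of the order. Your valuation argument, writing $u^{p-1} = 1 + c p^m$ with $p \nmid c$ and tracking $v_p\bigl((u^{p-1})^{p^j} - 1\bigr) = m + j$, is the missing step, and you are also right to make explicit the hypothesis that $u$ reduces to a primitive root modulo $p$; the paper's statement omits it, but its proof silently invokes it (\enquote{Since $\tau$ is already a primitive root}), and without it the lemma is false (any $u$ of order $p^{k-1}(p-1)/2$ passes the displayed test).

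Your analysis also correctly exposes that the lemma as displayed is only right for $k = 2$. The criterion for $u$ (a primitive root mod $p$) to be primitive mod $2p^k$ is $m = 1$, i.e. $u^{p-1} \not\equiv 1 \bmod p^2$, whereas the displayed condition $u^{p-1} \not\equiv 1 \bmod 2p^k$ only asserts $m \le k-1$. For $k \ge 3$ these differ: a lift of a primitive root with $v_p(u^{p-1}-1) = 2$ (such lifts exist for every odd $p$ and every $k \ge 3$, since exactly $p^{k-2}$ of the $p^{k-1}$ lifts of a given primitive root $\tau$ mod $p$ have $m \ge 2$) satisfies $u^{p-1} \not\equiv 1 \bmod 2p^k$ yet has order only $p^{k-2}(p-1)$. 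So your hesitation in the last paragraph is warranted, and the resolution is that the statement should read $u^{p-1} \not\equiv 1 \bmod p^2$ (equivalently $\bmod\ 2p^2$) for all $k \ge 2$; with that correction your argument is complete, and it is strictly stronger than the proof given in the paper.
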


\begin{proof}[\textbf{Proof}] For any odd prime $p>2$ and any integer $k\geq2$, the prime divisors $q$ of $\varphi (2p^k)=p^{k-1}(p-1)$ are the same as the prime divisors $q$ of $p(p-1)$. Furthermore, the primitive root test, \hyperlink{lem9696.500FR}{Lemma} \ref{lem9696.500FR}, states that $u\in \left( \Z2p^k\Z\right) ^{\times}$ is a primitive root if and only if 
	\begin{equation}\label{eq9977FRL.400Gj}
		u^{\frac{p(p-1)}{q}} -1\not \equiv 0 \mod  2p^k
	\end{equation} 
	is true for all prime $q\mid \varphi (2p^k)=p^{k-1}(p-1)$. Since $\tau \in\Z/p\Z$ is already a primitive root, it follows that \eqref{eq9977FRL.400Gj} is true for all prime divisors $q\mid p-1$. 
\end{proof}

\begin{lem}\label{lem9977FRL.400T}\hypertarget{lem9977FRL.400T} Let $p$ be an odd prime and let $\tau \in \Z/p\Z$ be the set of primitive roots. Then
	
	\begin{enumerate}[font=\normalfont, label=(\roman*)]
		\item If $\tau$ is a primitive root in $ \Z/p\Z$ then $ \tau \text{ or } \tau+p $
		is a primitive root in $\displaystyle \Z/p^k\Z$ for every integer $k\geq1$.
		\item If $\tau$ is a primitive root in $ \Z/p\Z$ then $ \tau \text{ or } \tau^{-1} $
		is a primitive root in $\displaystyle \Z/p^k\Z$ for every integer $k\geq1$, where $\tau^{-1} \bmod p$ is the inverse.
	\end{enumerate}	
\end{lem}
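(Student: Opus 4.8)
The plan is to reduce both statements to the single prime power $p^2$, using the standard fact that a primitive root modulo $p^2$ remains a primitive root modulo $p^k$ for every $k\geq2$. First I would record the criterion at the level of $p^2$: since $\tau$ is already a primitive root modulo $p$, its order modulo $p^2$ is a multiple of $p-1$ dividing $\varphi(p^2)=p(p-1)$, hence equals either $p-1$ or $p(p-1)$; consequently $\tau$ is a primitive root modulo $p^2$ if and only if $\tau^{p-1}\not\equiv1\bmod p^2$, in the spirit of the lift test of \hyperlink{lem9977FRL.400G}{Lemma} \ref{lem9977FRL.400G}. The step $p^2\to p^k$ is the routine lifting-the-exponent induction showing $\tau^{p^{j-1}(p-1)}=1+c_jp^j$ with $p\nmid c_j$ (this is exactly where $p$ odd is needed). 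With this reduction in hand, each part amounts to proving that at least one of the two stated candidates satisfies $(\cdot)^{p-1}\not\equiv1\bmod p^2$.

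For part (i) I would expand by the binomial theorem modulo $p^2$, keeping only the two lowest terms:
\[
(\tau+p)^{p-1}\equiv\tau^{p-1}+(p-1)\tau^{p-2}p\equiv\tau^{p-1}-p\tau^{p-2}\bmod p^2,
\]
so that $(\tau+p)^{p-1}-\tau^{p-1}\equiv-p\tau^{p-2}\bmod p^2$. Since $\gcd(\tau,p)=1$, the quantity $p\tau^{p-2}$ is a nonzero multiple of $p$ modulo $p^2$; hence $\tau^{p-1}$ and $(\tau+p)^{p-1}$ are distinct modulo $p^2$, and they cannot both equal $1$. Whichever of the two is $\not\equiv1\bmod p^2$ is the desired primitive root modulo $p^k$.

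For part (ii) the decisive point — and the one requiring care — is that $\tau^{-1}$ is taken to be the least positive residue of the inverse modulo $p$, an integer in $[1,p-1]$, and \emph{not} the genuine inverse modulo $p^2$ (with the latter interpretation the statement would fail, since the inverse modulo $p^2$ of a nonstationary primitive root is again nonstationary). Writing $\tau'=\tau^{-1}\bmod p$, I would use that $\tau\geq2$ (as $\tau$ is a primitive root and $p\geq3$) together with $1\leq\tau,\tau'\leq p-1$ to conclude that $\tau\tau'=1+sp$ as integers with $1\leq s\leq p-2$, so in particular $s\not\equiv0\bmod p$. Then, setting $\tau^{p-1}\equiv1+ap$ and $(\tau')^{p-1}\equiv1+bp\bmod p^2$ and computing $(\tau\tau')^{p-1}=(1+sp)^{p-1}\equiv1-sp\bmod p^2$, a comparison of the two expressions for $(\tau\tau')^{p-1}$ yields $a+b\equiv-s\not\equiv0\bmod p$. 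Therefore $a$ and $b$ cannot both vanish modulo $p$, so at least one of $\tau,\tau'$ has $(\cdot)^{p-1}\not\equiv1\bmod p^2$ and is a primitive root modulo $p^k$ for all $k$.

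The main obstacle is precisely the control of $s\bmod p$ in part (ii): everything hinges on choosing reduced representatives so that $\tau\tau'=1+sp$ lies strictly below $p^2$, forcing $1\leq s\leq p-2$ and hence $s\not\equiv0\bmod p$. By contrast, the binomial expansions in both parts and the $p^2\to p^k$ induction are standard and can be carried out directly.
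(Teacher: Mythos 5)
Your proposal is correct, and it supplies a complete argument where the paper gives none: the paper's ``proof'' of this lemma is literally ``try it as an exercise, apply the primitive root test, or consult \cite[Lemma 1.4.5]{CH1996} for (i).'' Your reduction to the single criterion $(\cdot)^{p-1}\not\equiv 1 \bmod p^2$ (via the standard lifting-the-exponent induction for odd $p$) is exactly the intended route, and is consistent with the paper's own Lemma \ref{lem9977LPFR.300}. For (i), the observation that $(\tau+p)^{p-1}-\tau^{p-1}\equiv -p\tau^{p-2}\not\equiv 0\bmod p^2$ forces the two candidates to have distinct $(p-1)$-th powers mod $p^2$, so they cannot both fail; this is the same binomial computation that underlies the paper's equation \eqref{eq9977LPFR.300dc}, just used decisively. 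For (ii), you correctly identify the one point of genuine substance, which the paper's statement glosses over: $\tau^{-1}$ must be read as the reduced residue of the inverse modulo $p$ (the statement is false for the inverse modulo $p^2$, since $\tau^{p-1}\equiv 1\bmod p^2$ implies $(\tau^{-1})^{p-1}\equiv 1\bmod p^2$ for that choice). Your bound $\tau\tau'=1+sp$ with $1\leq s\leq p-2$, obtained from $2\leq \tau\tau'\leq (p-1)^2<p^2$, and the resulting identity $a+b\equiv -s\not\equiv 0\bmod p$, is a clean and correct way to finish; it even handles the degenerate case $\tau=\tau'$ (only possible for $p=3$) since $2a\not\equiv 0$ forces $a\not\equiv 0$ when $p$ is odd. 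No gaps.
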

\begin{proof}[\textbf{Proof}] Try it as an exercise, for example apply the \textit{primitive root test} described above or consult the literature, say {\color{red}\cite[Lemma 1.4.5]{CH1996}} for (i). 
\end{proof}

Another very simple procedure generates the primitive roots of an adjacent finite ring. For large parameters $p>2$ and $k\geq2$, the lifting procedures described below are more efficient than the procedure that generates the primitive roots by exponentiations. 
\begin{lem}\label{lem9977FRL.400D}\hypertarget{lem9977FRL.400D}Let $p$ be an odd prime and let $\mathscr{T}_k=\{\tau_k :\ord_{p^k}\tau_k=p^{k-1}(p-1)\}$ be the set of primitive roots in $\Z/p^k\Z$. If $k\geq1$, then
	\begin{enumerate}[font=\normalfont, label=(\roman*)]
		\item Every primitive root in $\displaystyle \left( \Z/p^{k+1}\Z\right)^{\times}$ is of the form $\displaystyle \tau_{k+1}=\tau_k+ap^{k}$,
		where $a\in \Z/p\Z$ and $\tau_k\in\mathscr{T}_k$.
		\item Every primitive root in $\displaystyle \left( \Z/2p^{k+1}\Z\right)^{\times}$ is of the form $\displaystyle \tau_{k+1}=\tau_k+ap^{k}$,
		where $a\in \Z/p\Z$ and $\tau_k\in\mathscr{T}_k$.
	\end{enumerate}
\end{lem}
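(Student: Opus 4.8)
The plan is to exploit the natural reduction homomorphism $\pi\colon\left(\Z/p^{k+1}\Z\right)^{\times}\to\left(\Z/p^{k}\Z\right)^{\times}$ together with the fact that, for an odd prime $p$, both of these unit groups are cyclic. First I would record the two standard structural facts on which everything rests: that $\left(\Z/p^{k}\Z\right)^{\times}$ is cyclic of order $\varphi(p^{k})=p^{k-1}(p-1)$ for every $k\geq1$, and that $\pi$ is a surjective group homomorphism whose kernel has order $\varphi(p^{k+1})/\varphi(p^{k})=p$. Both are classical; surjectivity is immediate, since every residue class modulo $p^{k}$ prime to $p$ lifts to a class modulo $p^{k+1}$ prime to $p$.

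The core step is the observation that a surjective homomorphism sends generators to generators. Concretely, if $\tau_{k+1}$ is a primitive root modulo $p^{k+1}$, then $\tau_{k+1}$ generates $\left(\Z/p^{k+1}\Z\right)^{\times}$, so that
\[
\left(\Z/p^{k}\Z\right)^{\times}=\pi\left(\left(\Z/p^{k+1}\Z\right)^{\times}\right)=\pi\left(\langle\tau_{k+1}\rangle\right)=\langle\pi(\tau_{k+1})\rangle,
\]
whence $\tau_{k}:=\pi(\tau_{k+1})=\tau_{k+1}\bmod p^{k}$ is itself a generator, i.e. $\ord_{p^{k}}\tau_{k}=\varphi(p^{k})$ and $\tau_{k}\in\mathscr{T}_{k}$. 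From $\tau_{k+1}\equiv\tau_{k}\bmod p^{k}$ we get $\tau_{k+1}=\tau_{k}+ap^{k}$ for some integer $a$, and choosing the representative modulo $p^{k+1}$ lets us take $a\in\Z/p\Z$. This establishes part (i). As an alternative staying within the paper's toolkit, one could instead verify the claim with the primitive root test of \hyperlink{lem9696.500FR}{Lemma} \ref{lem9696.500FR}, using that $\varphi(p^{k+1})$ and $\varphi(p^{k})$ have the same prime divisors; but the homomorphism argument is shorter and avoids tracking the differing exponents $\varphi(p^{k+1})/q$ versus $\varphi(p^{k})/q$.

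For part (ii) I would reduce to part (i) by identifying the two relevant unit groups. Since a unit $u$ modulo $2p^{k+1}$ is odd, $u^{n}$ is odd for every $n$, so $u^{n}\equiv1\bmod 2p^{k+1}$ holds if and only if $u^{n}\equiv1\bmod p^{k+1}$; consequently reduction modulo $p^{k+1}$ is an isomorphism $\left(\Z/2p^{k+1}\Z\right)^{\times}\xrightarrow{\sim}\left(\Z/p^{k+1}\Z\right)^{\times}$ preserving multiplicative orders. Hence a primitive root modulo $2p^{k+1}$ is precisely one whose reduction modulo $p^{k+1}$ is a primitive root, and applying part (i) to that reduction yields the desired form $\tau_{k+1}=\tau_{k}+ap^{k}$ with $\tau_{k}\in\mathscr{T}_{k}$ and $a\in\Z/p\Z$.

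I expect the only delicate point to be bookkeeping rather than any genuine difficulty: the statement quantifies over representatives, so I must be careful that $\tau_{k}+ap^{k}$ is read modulo $p^{k+1}$ (resp. modulo $2p^{k+1}$) while $\tau_{k}$ ranges over a fixed residue set $\mathscr{T}_{k}$ modulo $p^{k}$. The substantive mathematical content lies entirely in the two classical facts quoted at the start, namely cyclicity of $\left(\Z/p^{k}\Z\right)^{\times}$ and surjectivity of $\pi$, after which the generator-to-generator principle finishes the argument with no estimation or case analysis.
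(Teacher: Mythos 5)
Your proof is correct, but it follows a genuinely different route from the one the paper indicates. The paper gives no self-contained argument for this lemma: its ``proof'' is a pointer to \hyperlink{lem9977LPFR.300}{Lemma} \ref{lem9977LPFR.300}, i.e.\ to a computation built on the Lucas-type primitive root test (\hyperlink{lem9696.500FR}{Lemma} \ref{lem9696.500FR}) and binomial expansions of $\tau^{\varphi(p^{k+1})/q}$ written in base $p$. You instead argue structurally: cyclicity of $\left(\Z/p^{k}\Z\right)^{\times}$ for odd $p$, surjectivity of the reduction map $\pi$, and the generator-to-generator principle show that the reduction mod $p^{k}$ of any primitive root mod $p^{k+1}$ lies in $\mathscr{T}_{k}$, which is exactly the asserted containment; part (ii) then reduces to part (i) via the order-preserving isomorphism $\left(\Z/2p^{k+1}\Z\right)^{\times}\cong\left(\Z/p^{k+1}\Z\right)^{\times}$. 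Your argument is shorter, avoids the case analysis on prime divisors of $\varphi(p^{k+1})$, and --- importantly --- proves precisely the direction the lemma asserts (every primitive root upstairs reduces to one downstairs), whereas the test-based computation in \hyperlink{lem9977LPFR.300}{Lemma} \ref{lem9977LPFR.300} is really aimed at the converse lifting question (which $\tau_{k}+ap^{k}$ are primitive roots mod $p^{k+1}$) and, as the paper itself concedes in its case (ii), leaves one congruence undetermined. The trade-off is that you import cyclicity of $\left(\Z/p^{k}\Z\right)^{\times}$ as a classical fact rather than deriving it from the paper's toolkit, and you correctly flag the one genuine bookkeeping point in (ii): the expression $\tau_{k}+ap^{k}$ only pins down a residue mod $p^{k+1}$, and a unit mod $2p^{k+1}$ must in addition be odd.
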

\begin{proof}[\textbf{Proof}] There are several methods available to verify these results, see \hyperlink{lem9977LPFR.300}{Lemma} \ref{lem9977LPFR.300} for a proof based on the primitive root test, \hyperlink{lem9696.500FR}{Lemma} \ref{lem9696.500FR}, and binomial expansions. 
\end{proof}
A very similar procedure generates the primitive roots of a nonadjacent finite ring. 
\begin{lem}\label{lem9977FRL.400B}\hypertarget{lem9977FRL.400B}Let $p$ be an odd prime and let $\mathscr{T}=\{\tau :\ord_p\tau=p-1\}$ be the set of primitive roots in $\Z/p\Z$. If $k\geq2$, then
	\begin{enumerate}[font=\normalfont, label=(\roman*)]
		\item Every primitive root in $\displaystyle \left( \Z/p^k\Z\right)^{\times}$ is of the form $\displaystyle \tau_k=\tau+ap^{k-1}$,
		where $a\in \Z/p^{k-1}\Z$ and $\tau\in\mathscr{T}$.
		\item Every primitive root in $\displaystyle \left( \Z/2p^k\Z\right)^{\times}$ is of the form $\displaystyle \tau_k=\tau+ap^{k-1}$,
		where $a\in \Z/2p^{k-1}\Z$ and $\tau\in\mathscr{T}$.
	\end{enumerate}
\end{lem}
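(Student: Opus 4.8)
The plan is to read the statement as a \emph{necessary} condition on primitive roots and to derive it from the surjectivity of the reduction map, leaving the converse (which values of $a$ actually succeed) untouched, since it is not asserted. Write $\rho\colon\left(\Z/p^k\Z\right)^{\times}\to\left(\Z/p\Z\right)^{\times}$ for reduction modulo $p$; by \hyperlink{lem9696.500FR-FE}{Lemma}~\ref{lem9696.500FR-FE} this is a well-defined homomorphism of finite abelian groups, and it is surjective because any unit modulo $p$ lifts to a unit modulo $p^k$. Both groups are cyclic, of orders $\varphi(p^k)=p^{k-1}(p-1)$ and $p-1$ respectively, the kernel having order $p^{k-1}$.

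First I would invoke the elementary fact that a surjective homomorphism sends a generator to a generator: if $\tau_k$ is a primitive root modulo $p^k$, i.e. $\langle\tau_k\rangle=\left(\Z/p^k\Z\right)^{\times}$, then $\langle\rho(\tau_k)\rangle=\rho\!\left(\langle\tau_k\rangle\right)=\left(\Z/p\Z\right)^{\times}$, so $\tau:=\tau_k\bmod p$ is a primitive root modulo $p$, that is $\tau\in\mathscr{T}$. Consequently $\tau_k\equiv\tau\pmod p$, and the residues modulo $p^k$ lying in this single congruence class are exactly the $p^{k-1}$ numbers $\tau+ap$ with $a\in\Z/p^{k-1}\Z$, which is the asserted form in part (i). I would explicitly record that only the inclusion is being proved: indeed the converse is false, since already among the $p$ lifts of $\tau$ from $\Z/p\Z$ to $\Z/p^2\Z$ exactly one fails to be a primitive root, so there is no need to determine which $a$ occur.

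For part (ii) the same mechanism applies after one extra observation. The Chinese Remainder Theorem gives $\left(\Z/2p^k\Z\right)^{\times}\cong\left(\Z/2\Z\right)^{\times}\times\left(\Z/p^k\Z\right)^{\times}\cong\left(\Z/p^k\Z\right)^{\times}$, so reduction modulo $p$ is again a surjection onto $\left(\Z/p\Z\right)^{\times}$ and the generator-to-generator step yields $\tau_k\bmod p=\tau\in\mathscr{T}$ verbatim. The only difference is arithmetic bookkeeping: the congruence class of $\tau$ modulo $p$ now contains $2p^{k-1}$ residues modulo $2p^k$, so the parameter $a$ ranges over $\Z/2p^{k-1}\Z$, matching the statement.

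I expect no serious mathematical obstacle; the delicate points are entirely cosmetic, namely keeping the increment modulus distinct from the range of the parameter $a$, and carrying the factor of $2$ through the Chinese Remainder reduction in (ii). If one prefers to stay inside the framework used for \hyperlink{lem9977FRL.400D}{Lemma}~\ref{lem9977FRL.400D}, the step $\tau\in\mathscr{T}$ can be obtained through orders instead: with $d=\ord_p(\tau)$ one has $\tau_k^{d}\equiv1\pmod p$, whence $\tau_k^{dp^{k-1}}\equiv1\pmod{p^k}$ by the binomial estimate $v_p\!\left((1+pm)^{p^{k-1}}-1\right)\ge k$; this forces $p^{k-1}(p-1)=\ord_{p^k}(\tau_k)$ to divide $dp^{k-1}$, hence $(p-1)\mid d$ and therefore $d=p-1$, recovering $\tau\in\mathscr{T}$ without the homomorphism.
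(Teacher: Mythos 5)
The paper gives no proof of this lemma at all: it is stated bare, followed only by the remark that Lemma~\ref{lem9977FRL.400T}(i) is a special case, and the companion Lemma~\ref{lem9977FRL.400D} merely points to Lemma~\ref{lem9977LPFR.300}. So there is no argument of the author's to compare against, and your proof must stand on its own. Its core does stand: reading the lemma as a necessary condition only, the fact that a surjective homomorphism of finite cyclic groups carries generators to generators (or your order-theoretic variant via $v_p\bigl((1+pm)^{p^{k-1}}-1\bigr)\ge k$) correctly shows that any primitive root $\tau_k$ modulo $p^k$, or modulo $2p^k$, reduces to a primitive root modulo $p$; the CRT bookkeeping in (ii) and the counts of residues in a congruence class are also right.

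The one point you should not wave off as ``cosmetic'' is the mismatch between what you prove and what the lemma literally asserts. You conclude $\tau_k=\tau+ap$ with $a\in\Z/p^{k-1}\Z$, i.e.\ $\tau_k\equiv\tau\pmod p$, and then call this ``the asserted form''; but the printed form is $\tau_k=\tau+ap^{k-1}$. Since $ap^{k-1}\bmod p^k$ depends only on $a\bmod p$, the printed parametrization yields only the $p$ residues congruent to $\tau$ modulo $p^{k-1}$, not the $p^{k-1}$ residues congruent to $\tau$ modulo $p$. The two readings coincide exactly when $k=2$. For $k\ge3$ the printed statement is false with $\tau$ a representative in $\{1,\dots,p-1\}$: there exist primitive roots modulo $p^k$ congruent to $\tau+jp$ modulo $p^2$ with $1\le j\le p-1$, and no such residue equals $\tau'+ap^{k-1}$ for any $\tau'\in\mathscr{T}$, because that would force $\tau'\equiv\tau+jp\pmod{p^2}$ with $\tau'\le p-1$. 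What you have actually proved is the corrected version with increment $p$ --- which is the right theorem, and the one for which Lemma~\ref{lem9977FRL.400T}(i) genuinely is a special case --- but you should state explicitly that you are amending the displayed formula rather than reproducing it.
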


The first part of \hyperlink{lem9977FRL.400T}{Lemma} \ref{lem9977FRL.400T} is a special case of this result.
\section{Primitive Roots in the Finite Ring $\Z/p^2\Z$} \label{S9977LPFR}\hypertarget{S9977LPFR}
The relationship between the first two finite rings $\Z/p\Z$ and $\Z/p^2\Z$ seems to be the most important ones --- the relationships between any other two finite rings $\Z/p^k\Z$ and $\Z/p^m\Z$, with $k\ne m$ are simple variations of the former.\\

Let the symbols $g(p)$ and $h(p)$ denote the least primitive root in $\Z/p\Z$ and $\Z/p^2\Z$ respectively. For primes in the range $\leq 10^{12}$, only two cases have been found to have distinct least primitive roots, for example, a primitive root in $\Z/p\Z$ does not lift to a primitive root in $\Z/p^2\Z$. The largest example in \autoref{table100-2} was very recently discovered, see \cite{PA2009}. \\

\vskip .1 in 
\begin{table}[H]
	\setlength{\tabcolsep}{0.5cm}
	\renewcommand{\arraystretch}{1.50092}
	\setlength{\arrayrulewidth}{0.75pt}
	\centering
	\begin{tabular}{l|l|l}
		
		Prime $p$	& $g(p)\in \Z/p\Z$& $h(p)\in \Z/p^2\Z$\\
		\hline
		$p<10^{12}$&$g(p)\geq2$&$h(p)=g(p)$\\
		\hline
		$40487$&$g(p)=5$&$h(p)=10$\\
		\hline
		$6692367337$&$g(p)=5$&$h(p)=7$
	\end{tabular}  
	\vskip .1 in		
	\caption{Least primitive root in $\Z/p\Z$ and $\Z/p^2\Z$}
	\label{table100-2}
\end{table}

\begin{lem}\label{lem9977LPFR.300}\hypertarget{lem9977LPFR.300} For any odd prime $p$, a primitive root $\tau \in \Z/p\Z$ lifts to a primitive root 
	$\tau \in \Z/p^2\Z$ if and only if 
	$$\tau^{p-1}\not \equiv 1 \bmod p^2.$$
\end{lem}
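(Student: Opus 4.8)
The plan is to apply the primitive root test, \hyperlink{lem9696.500FR}{Lemma} \ref{lem9696.500FR}, directly in the ring $\Z/p^2\Z$ and to show that all but one of the noncongruences it requires are forced by the single hypothesis that $\tau$ is a primitive root modulo $p$. Since $\varphi(p^2)=p(p-1)$, the prime divisors $q\mid\varphi(p^2)$ are exactly $q=p$ together with the prime divisors $q\mid p-1$. By the test, $\tau$ is a primitive root modulo $p^2$ if and only if
\[
\tau^{p(p-1)/q}\not\equiv 1 \bmod p^2
\]
holds for every such $q$, so I would handle the two kinds of prime divisors separately.

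First I would dispose of the divisors $q\mid p-1$, for which I claim the condition is automatic. Writing $\tau^{p(p-1)/q}=\left(\tau^{(p-1)/q}\right)^{p}$ and reducing modulo $p$, Fermat's little theorem, \hyperlink{lem9696.500FR-FE}{Lemma} \ref{lem9696.500FR-FE}, gives $\left(\tau^{(p-1)/q}\right)^{p}\equiv \tau^{(p-1)/q}\bmod p$. Because $\tau$ is a primitive root modulo $p$ and $q\mid p-1$, we have $\tau^{(p-1)/q}\not\equiv 1\bmod p$, whence $\tau^{p(p-1)/q}\not\equiv 1\bmod p$ and therefore $\tau^{p(p-1)/q}\not\equiv 1\bmod p^2$. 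Thus every condition indexed by a prime $q\mid p-1$ is satisfied no matter how $\tau$ behaves modulo $p^2$.

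This leaves only the divisor $q=p$, for which the condition reads $\tau^{p(p-1)/p}=\tau^{p-1}\not\equiv 1\bmod p^2$. Combining the two cases, the entire list of test conditions collapses to this one inequality, so $\tau$ is a primitive root modulo $p^2$ if and only if $\tau^{p-1}\not\equiv 1\bmod p^2$, which is the assertion.

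I do not anticipate a real obstacle; the only point needing care is the lifting step for $q\mid p-1$, where one uses that a noncongruence modulo $p$ remains a noncongruence modulo $p^2$ --- immediate since $p\mid p^2$, while the reverse implication would be false. The argument parallels the proof of the primitive root lift test, \hyperlink{lem9977FRL.400G}{Lemma} \ref{lem9977FRL.400G}, and the same bookkeeping extends verbatim to higher prime powers $p^k$ and to the rings $\Z/2p^k\Z$.
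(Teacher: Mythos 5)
Your proposal is correct and follows essentially the same route as the paper: apply the primitive root test in $\Z/p^2\Z$, split the prime divisors of $\varphi(p^2)=p(p-1)$ into those with $q\mid p-1$ and $q=p$, show the former conditions are inherited from primitivity modulo $p$, and observe that only the $q=p$ condition, namely $\tau^{p-1}\not\equiv 1\bmod p^2$, can fail. Your handling of the $q\mid p-1$ case via $\left(\tau^{(p-1)/q}\right)^{p}\equiv \tau^{(p-1)/q}\bmod p$ is in fact cleaner and more rigorous than the paper's base-$p$ expansion argument, but it is a refinement of the same idea rather than a different method.
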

\begin{proof}[\textbf{Proof}] The primitive root test in \hyperlink{lem9977FRL.400G}{Lemma} \ref{lem9977FRL.400G} states that an element $\tau \in \Z/p^2\Z$ is a primitive root if and only if 
	\begin{equation}\label{eq9977LPFR.300d}
		\tau^{\varphi(p^2)/q}\not\equiv 1 \bmod p^2
	\end{equation}
	for all prime divisor $q\mid \varphi(p^2)=p(p-1)$. This test is verified in 2 steps.\\
	
	Case (i) $q\mid p-1$. The information $\pm1\ne \tau\in \left( \Z/p\Z\right)^{\times} $ is a primitive root implies that $\tau^{(p-1)/q}\not \equiv \pm1\bmod p$. Rewriting $\tau^{(p-1)/q}$ as an integer then reducing mod $p^2$ yield   
	\begin{align}\label{eq9977LPFR.300da}
		\tau^{(p-1)/q}&\ne \pm1+a_1p+a_2p^2+\cdots+a_kp^k\\[.3cm]
		&\not \equiv \pm1\bmod p^2\nonumber,	
	\end{align}
	where $0\leq|a_i|<p$ and some $k\geq0$. Consequently, 
	\begin{align}\label{eq9977LPFR.300db}
		\tau^{p(p-1)/q}&\ne \left( \pm1+a_1p+a_2p^2+\cdots+a_kp^k\right)^{p(q-1/q)} \\[.3cm]
		&\not \equiv \left( \pm1\right)^{p} \bmod p^2\nonumber\\[.3cm]
		&\not \equiv \pm1\bmod p^2\nonumber.
	\end{align}
	Case (ii) $q=p$. Rewriting $\tau^{p(p-1)/q}$ as an integer and reducing mod $p^2$ lead to the relation 
	\begin{align}\label{eq9977LPFR.300dc}
		\tau^{p(p-1)/q}&=\left( a_0+a_1p+a_2p^2+\cdots+a_kp^k\right)^{p-1} \\[.3cm]
		& =a_0^{p-1}+(p-1)a_0^{(p-2)}a_1p+\cdots +a_k^{p-1}p^{k(p-1)} \nonumber\\[.3cm]
		& \equiv 1+(p-1)a_0^{(p-2)}a_1p \bmod p^2\nonumber\\[.3cm]
		& \equiv 1-a_0^{(p-2)}a_1p \bmod p^2\nonumber,
	\end{align}
	where $0\leq|a_i|<p$ and some $k\geq0$. Since $a_1$ is unknown, this congruence remains undetermined.
\end{proof}
For some primes $p$ and some primitive root $\tau \bmod p$ there are possibilities that the values $a_1=0$, but it seems to be a very difficult problem to determine which primitive roots modulo $p$ cannot be lifted modulo $p^2$. This topic is related to the Wieferich primes problem, see \cite{PA2009}, \cite{KJ2023}, et alii.\\

To develop a formula for the exceptional value $a_1$, assume the left side is 1 and set $a_0=\tau$ in the second line of \eqref{eq9977LPFR.300dc}. Solving it yields,
\begin{equation}\label{eq9977LPFR.300f0}
	a_1\equiv 	\frac{ 1-\tau^{p-1}}{p}\cdot \left( (p-1)\tau^{p-2}\right) ^{-1} \bmod p^2,
\end{equation}
where $\left( (p-1)\tau^{p-2}\right) ^{-1}\equiv a \bmod p$ is the inverse modulo $p$.
A different approach to the proof of \hyperlink{lem9977LPFR.300}{Lemma} \ref{lem9977LPFR.300} is given in {\color{red}\cite[Lemma 2]{AT1976}}.\\

\begin{exa}\label{exa9977LPFR.200b}{\normalfont A demonstration of \hyperlink{lem9977LPFR.300}{Lemma} \ref{lem9977LPFR.300}. Let $p=41$ be a prime number. Define the subsets 
		\begin{align}\label{eq9977LPFR.300f1}
			\mathscr{T}(p)&=\{\text{ primitive roots } \bmod p\}\\
			&=\{6,7,11,12,13,15,17,19,22,24,26,28,29,30,34,35\}\nonumber 
		\end{align}		
		and
		\begin{align}\label{eq9977LPFR.300f2}
			\mathscr{T}(p^2)&=\{\text{ primitive roots } \bmod p^2\}\\
			&=\{6,7,11,12,13,15,17,19,22,24,26,28,29,30,34,35,\ldots\}\nonumber 
		\end{align}			
		The numerical data in \eqref{eq9977LPFR.300f1} and \eqref{eq9977LPFR.300f2} show that the intersection $\mathscr{T}(41)\cap \mathscr{T}(41^2)=\mathscr{T}(41)$. So, every primitive root in $\mathscr{T}(p)$ lifts to a primitive root in $\mathscr{T}(p^2)$.	
		
	}		
\end{exa}

\begin{exa}\label{exa9977LPFR.200c}{\normalfont A demonstration of \hyperlink{lem9977LPFR.300}{Lemma} \ref{lem9977LPFR.300}. Let $p=43$ be a prime number. Define the subsets 
		\begin{align}\label{eq9977LPFR.300c1}
			\mathscr{T}(p)&=\{\text{ primitive roots } \bmod p\}\\
			&=\{3,5,12,18,19,20,26,28,29,30,33,34\}\nonumber 
		\end{align}		
		and
		\begin{align}\label{eq9977LPFR.300c2}
			\mathscr{T}(p^2)&=\{\text{ primitive roots } \bmod p^2\}\\
			&=\{3,5,12,18,20,26,28,29,30,33,34,46,48,55,61,62,63,\ldots\}\nonumber 
		\end{align}			
		
		The numerical data in \eqref{eq9977LPFR.300c1} and \eqref{eq9977LPFR.300c2} show that the intersection $\mathscr{T}(43)\cap \mathscr{T}(43^2)\ne\mathscr{T}(43)$. So, every primitive root in $\mathscr{T}(p)$, but $g=19$, lifts to a primitive root in $\mathscr{T}(p^2)$.	In this case $19^{42}\equiv 1\bmod 43$, so $19\in \mathscr{T}(p)$ but $19\not\in \mathscr{T}(p^2)$. However, $19+43=62\in \mathscr{T}(p^2)$ is a primitive root mod $p^2$ --- this follows from \hyperlink{lem9977FRL.400T}{Lemma} \ref{lem9977FRL.400T}.
	}		
\end{exa}

\begin{exa}\label{exa9977LPFR.200d}{\normalfont A demonstration of \hyperlink{lem9977LPFR.300}{Lemma} \ref{lem9977LPFR.300} for the first prime $p=40487$ that has a least primitive root modulo $p$ that can not be lifted to a primitive root modulo $p^2$. Define the subsets 
		\begin{align}
			\mathscr{T}(p)&=\{\text{ primitive roots } \bmod p\}\\
			&=\{5,10,13,15,17,26,29,30,34,35,38,39,40,\ldots\}\nonumber 
		\end{align}		
		and
		\begin{align}
			\mathscr{T}(p^2)&=\{\text{ primitive roots } \bmod p^2\}\\
			&=\{10,13,15,17,26,29,30,34,35,38,39,40,\ldots\}\nonumber 
		\end{align}	
		The first round of congruences (d), (e) and (f) proves that $5$ is primitive root modulo $p$. 
		\begin{multicols}{2}
			\begin{enumerate}[font=\normalfont, label=(\alph*)]
				\item $p=40487$,
				\item $p-1=2\cdot 31\cdot653$,
				\item $\varphi(p)=p-1$,
				\item	$\displaystyle 	5^{31\cdot653}\equiv -1 \bmod 40487$,
				\item $\displaystyle 	5^{2\cdot653}\equiv 32940 \bmod 40487$,
				\item$\displaystyle 	5^{2\cdot31}\equiv 4413 \bmod 40487$
			\end{enumerate}		
		\end{multicols}		
		On the second round of congruences below
		\begin{multicols}{2}
			\begin{enumerate}[font=\normalfont, label=(\alph*)]
				\item $p^2=1,639,197,169$,
				\item $p(p-1)=2\cdot 31\cdot653\cdot40487$,
				\item $\varphi(p^2)=p(p-1)$,
				\item	$\displaystyle 	5^{31\cdot653}\equiv -1 \bmod 40487^2$,
				\item $\displaystyle 	5^{2\cdot653}\equiv 1089538110 \bmod 40487^2$,
				\item$\displaystyle 	5^{2\cdot31}\equiv 1388749000 \bmod 40487^2$,
				\item$\displaystyle 	5^{2\cdot 31\cdot653}\equiv 1 \bmod 40487^2$,
			\end{enumerate}		
		\end{multicols}	
		congruence (g) proves that $5$ is not a primitive root modulo $p^2$. However, $5+40487=40492\in \mathscr{T}(p^2)$ is a primitive root mod $p^2$ --- this follows from \hyperlink{lem9977FRL.400T}{Lemma} \ref{lem9977FRL.400T}.		
	}		
\end{exa}


\section{Estimates of Omega Function} \label{S4545EOFW-W}\hypertarget{S4545EOFW-W}
The little omega function $\omega:\N\longrightarrow \N$ defined by $\omega(n)=\#\{p\mid n:\text{ prime }p\}$ is a well studied function in number theory. It has the explicit upper bound $\omega(n)\leq 2\log n/\log\log n$, see {\color{red}\cite[Proposition 7.10]{DL2012}} {\color{red}\cite[Theorem 2.10]{MV2007}} and \cite{EP1985}. It emerges in the finite sum
\begin{equation} \label{eq4545EOFW400j}
	\sum _{d\mid n}|\mu(n)| 
	=2^{\omega(n)}		
	\ll n^{\varepsilon},
\end{equation}
where $\varepsilon>$ is an arbitrarily small number, which is frequently used in finite field analysis. The corresponding estimates for the big omega function are also important in finite field analysis.

\

For $u\ne0$, an integer $n\geq1$ and a multiplicative character $\chi\ne1$, there is a trivial upper bound for the exponential sum
\begin{eqnarray} \label{eq2727EES.400i}
	\sum _{1<d\mid n} \frac{\mu(d)}{\varphi(d)}\sum _{\ord \chi =d} \chi(u) 
	&\leq &\sum _{1<d\mid n} \left |\frac{\mu(d)}{\varphi(d)}\right|\left |\sum _{\ord \chi =d} \chi(u)\right| \\[.3cm]
	&\ll &\sum _{1<d\mid n}|\mu(n)| \nonumber\\[.3cm]		
	&\ll&n^{\varepsilon},
	\nonumber
\end{eqnarray}
  The last line in \eqref{eq2727EES.400i} follows from 

where The average order of the widely used exponential sum \eqref{eq2727EES.400i} over the integers is significantly smaller. In fact, it is 
\begin{equation} \label{eq4545EOFW.400h}
	x^{-1}\sum_{n\leq x}\sum _{1<d\mid n}|\mu(n)| \ll \log x.
\end{equation} 
This follows from
\begin{equation} \label{eq4545EOFW.400l}
	\sum _{n\leq x}	2^{\omega(n)}=c_0x\log x+c_1x+O(x^{1/2}\log x)		,
\end{equation}
where $c_0,c_1>0$ are constants, see {\color{red}\cite[p.\; 42]{MV2007}}. On the other hand, the average order over the shifted primes should be of the form
\begin{equation} \label{eq4545EOFW.400m}
	\sum _{p\leq x}	2^{\omega(p-1)}\overset{?}{=}c_2\frac{x}{\log x}\log \log x+c_3\frac{x}{\log x}+O\left( \frac{x}{(\log x )^2}\right),
\end{equation}
where $c_2,c_3>0$ are constants, this is not in the literature.\\

This is similar to the order of the divisor function, which has a nearly explicit upper bound of the form $\sum_{d\mid n}1=n^{(\log 2+o(1)/\log\log n)}$, see {\color{red}\cite[Proposition 7.12]{DL2012}}, {\color{red}\cite[Theorem 315]{HW1979}} and similar sources.

\section{Characteristic Functions in Finite Rings} \label{S2727CFFR-F}\hypertarget{S2727CFFR-F}
A representation of the characteristic function dependent on the orders of the cyclic groups is given below. This representation is sensitive to the primes decompositions $q=p_1^{e_1}p_2^{e_2}\cdots p_t^{e_t}$, with $p_i$ prime and $e_i\geq1$, of the orders of the cyclic groups $q=\# G$. 

\begin{lem} \label{lem2727CFFR.100-F}\hypertarget{lem2727CFFR.100-F}
	Let \(G\) be a finite cyclic group of order \(q=\# G\), and let \(0\neq u\in G\) be an invertible element of the group. Then
	\begin{equation}\label{eq2727CFFR.100d}
		\Psi (u)=\frac{\varphi (q)}{q-1}\sum _{d \mid q} \frac{\mu (d)}{\varphi (d)}\sum _{\ord(\chi ) = d} \chi (u)=
		\left \{\begin{array}{ll}
			1 & \text{ if } \ord_q (u)=\varphi (q),  \\
			0 & \text{ if } \ord_q (u)\neq \varphi (q). \\
		\end{array} \right .
	\end{equation}
\end{lem}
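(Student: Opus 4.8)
The plan is to reduce the double sum to an Euler product over the primes dividing $q$ by recognising the inner character sum as a Ramanujan sum. Since $G$ is cyclic, its dual group $\widehat{G}$ is also cyclic of order $q$, so for every divisor $d\mid q$ there are exactly $\varphi(d)$ characters of order $d$; this is the one structural input I would record first. Fix a generator $\gamma$ of $G$ and write $u=\gamma^{k}$. Then $\ord(u)=q/\gcd(q,k)$, so $u$ has maximal order $q$ (equivalently, generates $G$) if and only if $\gcd(q,k)=1$. This translates the condition $\ord(u)=q$ into a coprimality condition on the exponent $k$, which is what the multiplicative machinery will detect.

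Next I would evaluate the inner sum. Parametrising a character of order $d\mid q$ by $\gamma\mapsto e^{2\pi i t/d}$ with $\gcd(t,d)=1$, one gets $\sum_{\ord(\chi)=d}\chi(u)=\sum_{\gcd(t,d)=1}e^{2\pi i tk/d}=c_{d}(k)$, the Ramanujan sum. The weighted summand $\tfrac{\mu(d)}{\varphi(d)}c_{d}(k)$ is multiplicative in $d$, and $\mu$ annihilates every non-squarefree $d$, so the sum over $d\mid q$ factors as
\[
\sum_{d\mid q}\frac{\mu(d)}{\varphi(d)}\,c_{d}(k)
=\prod_{\ell\mid q}\left(1+\frac{\mu(\ell)}{\varphi(\ell)}\,c_{\ell}(k)\right),
\]
the product running over the primes $\ell\mid q$.

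To finish, I would compute the local factors from $c_{\ell}(k)=\ell-1$ when $\ell\mid k$ and $c_{\ell}(k)=-1$ when $\ell\nmid k$. Thus the factor at $\ell$ equals $1+\tfrac{-1}{\ell-1}(\ell-1)=0$ if $\ell\mid k$, and $1+\tfrac{-1}{\ell-1}(-1)=\tfrac{\ell}{\ell-1}$ if $\ell\nmid k$. If $u$ generates $G$ then no prime $\ell\mid q$ divides $k$, and the product collapses to $\prod_{\ell\mid q}\tfrac{\ell}{\ell-1}=q/\varphi(q)$; multiplying by the normalising factor $\varphi(q)/q$ returns $\Psi(u)=1$. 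If $u$ does not generate $G$ then some $\ell\mid q$ divides $k$, that local factor vanishes, and $\Psi(u)=0$. This is exactly the claimed dichotomy.

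The computation is mostly routine bookkeeping; the one step that genuinely needs justification is the count of $\varphi(d)$ characters of each order $d\mid q$ together with the multiplicativity of $c_{d}(k)$ in $d$, both of which rest on the cyclicity (self-duality) of $G$. A clean alternative that avoids Ramanujan sums altogether would be to use orthogonality on the quotient to obtain $\sum_{\ord(\chi)\mid d}\chi(u)=d\cdot\mathbf{1}[u\in G^{d}]$ and then Möbius-invert; this exhibits the generator condition directly as \enquote{$u$ is not an $\ell$-th power for any prime $\ell\mid q$,} which is perhaps the more transparent route.
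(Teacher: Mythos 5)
Your argument is correct, but note that the paper itself offers no proof of this lemma: the formula is stated, attributed to Vinogradov (or Landau), and the reader is referred to \cite{ES1957}, \cite{LN1997} and \cite{MP2007}. So there is no internal proof to compare against; what you have written is a legitimate, self-contained proof along exactly the lines of those references --- the identification $\sum_{\ord(\chi)=d}\chi(\gamma^k)=c_d(k)$ with a Ramanujan sum, the multiplicativity of $d\mapsto \mu(d)c_d(k)/\varphi(d)$ giving the Euler product over primes $\ell\mid q$, and the local evaluation $c_\ell(k)=\ell-1$ or $-1$ according as $\ell\mid k$ or not. Your closing alternative via $\sum_{\ord(\chi)\mid d}\chi(u)=d\cdot\mathbf{1}\left[u\in G^{d}\right]$ followed by M\"obius inversion is equally sound and is essentially how \cite{LN1997} organizes the computation.

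One point worth flagging. Your computation yields the identity with normalizing constant $\varphi(q)/q$ and with the condition \enquote{$u$ generates $G$}, i.e.\ $\ord(u)=q$, on the right-hand side --- which is the correct statement for a cyclic group of order $q=\#G$. The lemma as printed instead carries the factor $\varphi(q)/(q-1)$ and the condition $\ord_q(u)=\varphi(q)$; these are artifacts of the intended application ($G=(\Z/p\Z)^{\times}$ of order $p-1$, where the constant is $\varphi(p-1)/(p-1)$ and maximal order is $\varphi(p)$) being transcribed with $q$ playing two incompatible roles (group order versus modulus). The paper's own use of the lemma in the main-term and error-term computations carries the factor $\varphi(p-1)/(p-1)$, which is $\varphi(q)/q$ in your notation. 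So you have proved the statement the author actually uses; the discrepancy is a typo in the printed lemma, not a gap in your proof.
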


The authors in \cite{DH1937}, \cite{WR2001} attribute this formula to Vinogradov, and other authors attribute this formula to Landau, \cite{LE1927}. The proof and other details on the characteristic function are given in {\color{red}\cite[p. 863]{ES1957}}, {\color{red}\cite[p.\ 258]{LN1997}}, {\color{red}\cite[p.\ 18]{MP2007}}. The characteristic function for multiple primitive roots is used in {\color{red}\cite[p.\ 146]{CZ1998}} to study consecutive primitive roots. In \cite{DS2012} it is used to study the gap between primitive roots with respect to the Hamming metric. And in \cite{WR2001} it is used to prove the existence of primitive roots in certain small subsets \(A\subset \mathbb{F}_p\). In \cite{DH1937} it is used to prove that some finite fields do not have primitive roots of the form $a\tau+b$, with $\tau$ primitive and $a,b \in \mathbb{F}_p$ constants. In addition, the Artin primitive root conjecture for polynomials over finite fields was proved in \cite{PS1995} using this formula.

\section{Simultaneous Characteristic Functions } \label{S2727SPRFR-C}\hypertarget{S2727SPRFR-C}
Let $g(p)\geq2$ be the least primitive roots modulo $p$ and let $h(p)\geq2$ be the least primitive roots modulo $p^2$. The characteristic functions of the primitive roots in the finite rings $\Z/p\Z$ and $\Z/p^2\Z$ are defined by

\begin{equation}\label{eq2727SPRFR.100d}
	\Psi_{p} (g)=
	\begin{cases}
		1&\text{ if } \ord_pg=p-1,\\
		0&\text{ if } \ord_pg\ne p-1,
	\end{cases}
\end{equation}
and 
\begin{equation}\label{eq2727SPRFR.100f}
	\Psi_{p^2} (g)=
	\begin{cases}
		1&\text{ if } \ord_{p^2}g=p(p-1),\\
		0&\text{ if } \ord_{p^2}g\ne p(p-1),
	\end{cases}
\end{equation}
respectively. The basic foundation of the indicators functions \eqref{eq2727SPRFR.100d} and \eqref{eq2727SPRFR.100f}, in terms of exponential functions, are fully described in \hyperlink{S2727CFFR-F}{Section} \ref{S2727CFFR-F}.

\begin{lem} \label{lem2727SPRFR.100-C4}\hypertarget{lem2727SPRFR.100-C4} Let $p>1$ be a prime number and let $g\in \left( \Z/p\Z\right) ^{\times}$. Then, the characteristic function of stationary primitive roots in the finite rings $\Z/p\Z$ and  $\Z/p^2\Z$ is defined by
\begin{equation}\label{eq2727SPRFR.100h}
	\Psi_{s} (g)=	\frac{\Psi_{p} (g)\left( 1+\Psi_{p^2} (g)\right)}{2}=
	\begin{cases}
		0&\text{ if } \ord_{p}g\ne p-1\text{ and } \ord_{p^2}g\ne p(p-1),\\
		0&\text{ if } \ord_{p}g\ne p-1\text{ and } \ord_{p^2}g= p(p-1),\\
		0&\text{ if } \ord_{p}g=p-1\text{ and } \ord_{p^2}g\ne p(p-1),\\
		1&\text{ if } \ord_{p}g=p-1\text{ and } \ord_{p^2}g=p(p-1).\nonumber
	\end{cases}
\end{equation}
\end{lem}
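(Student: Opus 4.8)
The plan is to prove the identity by a direct evaluation of the right--hand expression on every admissible value of the pair $\bigl(\Psi_p(g),\Psi_{p^2}(g)\bigr)$. By \hyperlink{lem2727CFFR.100-F}{Lemma} \ref{lem2727CFFR.100-F}, applied to the cyclic groups $\left(\Z/p\Z\right)^{\times}$ and $\left(\Z/p^2\Z\right)^{\times}$, both indicators $\Psi_p(g)$ and $\Psi_{p^2}(g)$ take values in $\{0,1\}$, so the quantity $\Psi_p(g)\bigl(1+\Psi_{p^2}(g)\bigr)/2$ is completely determined by the joint value $\bigl(\Psi_p(g),\Psi_{p^2}(g)\bigr)\in\{0,1\}^2$. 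Hence the assertion reduces to the four rows of the displayed table, and the real content is to decide which of the four sign patterns can actually occur and what the formula returns on each.

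The key structural input is the compatibility of primitivity with reduction modulo $p$. The reduction map $\left(\Z/p^2\Z\right)^{\times}\longrightarrow\left(\Z/p\Z\right)^{\times}$ is a surjective homomorphism of cyclic groups whose kernel has order $p$, so a generator of the larger group necessarily maps to a generator of the smaller one; equivalently, by the lifting criterion of \hyperlink{lem9977LPFR.300}{Lemma} \ref{lem9977LPFR.300}, a primitive root modulo $p^2$ is automatically a primitive root modulo $p$. In the language of indicators this is the implication $\Psi_{p^2}(g)=1\Rightarrow\Psi_p(g)=1$, which shows that the second row of the table, namely $\ord_p g\neq p-1$ together with $\ord_{p^2}g=p(p-1)$, is vacuous; its assigned value then holds for free, since the prefactor $\Psi_p(g)=0$ already annihilates the product.

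It remains to substitute the surviving patterns. Whenever $\Psi_p(g)=0$ the expression vanishes, matching the first two rows; and when $g$ is stationary, so that $\Psi_p(g)=\Psi_{p^2}(g)=1$, the expression equals $1\cdot(1+1)/2=1$, matching the last row, which is exactly the role of the normalizing factor $1/2$: it rescales the doubled weight $1+\Psi_{p^2}(g)$ back to an honest indicator. The step I expect to be the main obstacle is the third row, where $\ord_p g=p-1$ but $\ord_{p^2}g\neq p(p-1)$ --- the genuinely occurring nonstationary case, realized for instance by $g=5$, $p=40487$. Here one must confirm that the weighting actually collapses to the value $0$ demanded by the table, and it is precisely the balance between the prefactor $\Psi_p(g)$ and the corrective term $1+\Psi_{p^2}(g)$, read against the reduction relation above, that has to be verified with care rather than asserted.
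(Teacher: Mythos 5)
Your case-by-case strategy is the right one, and it is already more explicit than the paper's own proof, which consists of a single sentence asserting that a ``shifted product'' of the two indicators produces the desired characteristic function, with no evaluation of any case. You are also correct that the second row is vacuous, since a primitive root modulo $p^2$ reduces to a primitive root modulo $p$, and that rows one, two and four come out as claimed.

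However, you stop exactly at the point where the argument breaks, and the gap you flag in the third row cannot be closed. When $\ord_{p}g=p-1$ but $\ord_{p^2}g\neq p(p-1)$ (the genuinely occurring case, e.g.\ $g=5$, $p=40487$), direct substitution gives $\Psi_{p}(g)=1$ and $\Psi_{p^2}(g)=0$, hence
\[
\frac{\Psi_{p}(g)\left(1+\Psi_{p^2}(g)\right)}{2}=\frac{1\cdot(1+0)}{2}=\frac{1}{2},
\]
not the value $0$ demanded by the table. Expanding, $\Psi_{p}(1+\Psi_{p^2})/2=(\Psi_{p}+\Psi_{p}\Psi_{p^2})/2$, and since the vacuity of row two gives $\Psi_{p}\Psi_{p^2}=\Psi_{p^2}$, the expression is the \emph{average} $(\Psi_{p}+\Psi_{p^2})/2$ of the two indicators rather than their product; it is not $\{0,1\}$-valued. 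The honest characteristic function of stationarity is simply $\Psi_{p}(g)\Psi_{p^2}(g)$ (equivalently $\Psi_{p^2}(g)$), and the companion formula $\Psi_{n}=\Psi_{p}(1-\Psi_{p^2})/2$ of the next lemma fails in the same way, returning $1/2$ instead of $1$ on nonstationary primitive roots. So the identity is false as stated; the correct conclusion of your analysis is that the displayed equality fails in row three, and you should assert that rather than defer the verification.
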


\begin{proof}[\textbf{Proof}] A shifted product of these indicator functions \eqref{eq2727SPRFR.100d} and \eqref{eq2727SPRFR.100f} produces the indicator function \eqref{eq2727SPRFR.100h} of a restricted collection of primitive roots. This is precisely the indicator function for the collection of stationary primitive roots in the finite rings $\Z/p\Z$, that can be lifted to primitive roots in the finite rings $\Z/p^2\Z$. 
\end{proof}
The same technique provides a characteristic function for nonstationary primitive roots over finite rings. 
\begin{lem} \label{lem2727SPRFR.100-C8}\hypertarget{lem2727SPRFR.100-C8} Let $p>1$ be a prime number and let $g\in \left( \Z/p\Z\right) ^{\times}$. Then, the characteristic function of nonstationary primitive roots in the finite rings $\Z/p\Z$ and  $\Z/p^2\Z$ is defined by
	\begin{equation}\label{eq2727SPRFR.100k}
\Psi_{n} (g)=	\frac{\Psi_{p} (g)\left( 1-\Psi_{p^2} (g)\right)}{2}=
		\begin{cases}
			0&\text{ if } \ord_{p}g\ne p-1\text{ and } \ord_{p^2}g\ne p(p-1),\\
			0&\text{ if } \ord_{p}g\ne p-1\text{ and } \ord_{p^2}g= p(p-1),\\
			1&\text{ if } \ord_{p}g=p-1\text{ and } \ord_{p^2}g\ne p(p-1),\\
			0&\text{ if } \ord_{p}g=p-1\text{ and } \ord_{p^2}g=p(p-1).\nonumber
		\end{cases}
	\end{equation}
\end{lem}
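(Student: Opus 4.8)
The plan is to prove \eqref{eq2727SPRFR.100k} by the same shifted-product argument used for \hyperlink{lem2727SPRFR.100-C4}{Lemma} \ref{lem2727SPRFR.100-C4}, the nonstationary function differing from the stationary one only in the sign of the factor $1\mp\Psi_{p^2}(g)$. Since $\Psi_p(g)$ and $\Psi_{p^2}(g)$ each take only the values $0$ and $1$, the asserted identity is a finite case check: it suffices to run over the admissible value-pairs $\bigl(\Psi_p(g),\Psi_{p^2}(g)\bigr)$ and verify that $\tfrac12\Psi_p(g)\bigl(1-\Psi_{p^2}(g)\bigr)$ agrees with the tabulated output in each row.

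First I would isolate the one genuine arithmetic input, namely that the pair $\bigl(\Psi_p(g),\Psi_{p^2}(g)\bigr)$ can never equal $(0,1)$; equivalently, an element that is not a primitive root modulo $p$ cannot lift to a primitive root modulo $p^2$. This is what forces the first two rows of the table to $0$, and it is, I expect, the only step that actually requires proof. I would justify it through the order-lifting structure: writing $d=\ord_p g$, the surjective reduction map $\bigl(\Z/p^2\Z\bigr)^{\times}\to\bigl(\Z/p\Z\bigr)^{\times}$ gives $d\mid\ord_{p^2}g$, while the lifting criterion of \hyperlink{lem9977LPFR.300}{Lemma} \ref{lem9977LPFR.300}, together with the binomial computation in its proof, pins $\ord_{p^2}g$ down to either $d$ or $pd$. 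Hence $\ord_{p^2}g=p(p-1)$ can hold only when $d=p-1$, i.e. only when $g$ is already a primitive root modulo $p$. This yields the implication $\Psi_{p^2}(g)=1\Rightarrow\Psi_p(g)=1$ and discards the $(0,1)$ case as vacuous.

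With that implication in hand, only the three realizable pairs $(0,0)$, $(1,0)$, $(1,1)$ survive, and substituting each into the right-hand side of \eqref{eq2727SPRFR.100k} leaves a purely mechanical comparison with the listed values: the combination is supported exactly on the pair $(1,0)$, which is precisely the set of primitive roots modulo $p$ that fail to remain primitive modulo $p^2$, and it vanishes on $(0,0)$ and on the stationary pair $(1,1)$. The value attached to the vacuous $(0,1)$ row is then immaterial. The main, and essentially sole, obstacle is the elimination of the $(0,1)$ case carried out in the previous paragraph; once that structural fact is secured, the remainder is the same Boolean bookkeeping already invoked for \hyperlink{lem2727SPRFR.100-C4}{Lemma} \ref{lem2727SPRFR.100-C4}.
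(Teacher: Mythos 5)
Your overall strategy---reduce the identity to a Boolean case check over the value pairs $\bigl(\Psi_p(g),\Psi_{p^2}(g)\bigr)$---is reasonable, and it is already more than the paper supplies: the paper states this lemma with no proof at all, and its proof of the stationary analogue, Lemma \ref{lem2727SPRFR.100-C4}, is a single sentence about a ``shifted product of indicator functions.'' But your proposal misplaces the difficulty and then asserts a verification that in fact fails. The elimination of the pair $(0,1)$, which you single out as the one genuine arithmetic input, does no work for this identity: when $\Psi_p(g)=0$ the product $\tfrac{1}{2}\Psi_p(g)\bigl(1-\Psi_{p^2}(g)\bigr)$ vanishes regardless of the value of $\Psi_{p^2}(g)$, which already matches the $0$ in the table's second row, so that row is consistent whether or not the case is vacuous. (Your justification of the vacuity via the reduction map is correct, just not needed here.)

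The step you dismiss as ``purely mechanical'' is where the statement actually breaks. On the pair $(1,0)$---which is realizable, e.g.\ $g=19$, $p=43$ or $g=5$, $p=40487$, both exhibited in the paper's own examples---the middle expression evaluates to $\tfrac{1}{2}\cdot 1\cdot(1-0)=\tfrac{1}{2}$, whereas the table claims the value $1$. So the displayed chain of equalities is false as written: $\tfrac{1}{2}\Psi_p(g)\bigl(1-\Psi_{p^2}(g)\bigr)$ takes values in $\{0,\tfrac{1}{2}\}$ and is never equal to $1$; it is one half of the indicator of the nonstationary primitive roots, not that indicator itself. (The same defect afflicts Lemma \ref{lem2727SPRFR.100-C4}, where the pair $(1,0)$ yields $\tfrac{1}{2}$ against the table's claimed $0$.) A correct treatment must either drop the factor $\tfrac{1}{2}$ from the definition of $\Psi_n$, or replace the $1$ in the third row by $\tfrac{1}{2}$ and propagate that normalization through the counting arguments in Sections \ref{S2727SPRFR-SPR} and \ref{S2727SPRFR-DP}. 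Had you actually carried out the substitution you describe rather than asserting its outcome, you would have caught this; as written, your proposal certifies an identity that does not hold.
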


\section{Results for the Totient Function over Shifted Primes}\label{S4040DP-TF}\hypertarget{S4040DP-TF}
The average order of the totient function ratio $\varphi(n)/n$ over the shifted primes $n=p-1$ emerges on various results in the theory of primitive roots. The asymptotic formula of the first case $k=1$ of the result below was proved decades ago in \cite{SP1969}. 
\begin{lem} \label{lem4040DP.200A}\hypertarget{lem4040DP.200A} {\normalfont {({\color{red}\cite[Lemma 4.4]{VR1973}}) }} Let $x\geq 1$ be a large number, and let $\varphi (n)$ be the Euler totient function. If $k\geq1$ is an integer, then
	\begin{equation}\label{eq4040.200d}
		\sum_{p\leq x }\left( \frac{\varphi(p-1)}{p-1}\right) ^k  =a_k\frac{x}{\log x} +
		O\left(\frac{x}{(\log x)^2}\right) ,
	\end{equation}
where the constant
\begin{equation}\label{eq4040DP.200fk}
	a_k=\prod_{p\geq2}\left(1-\frac{p^k-(p-1)^k}{p^k(p-1)} \right), 
\end{equation}	
as $x \to \infty$. 
\end{lem}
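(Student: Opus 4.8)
The plan is to reduce the sum to a weighted count of primes in arithmetic progressions and then invoke the Siegel--Walfisz theorem. First I would exploit the fact that $(\varphi(n)/n)^k$ is multiplicative and depends only on the squarefree kernel of $n$. Möbius inversion then produces a multiplicative function $\lambda_k$, supported on the squarefree integers, with $\lambda_k(\ell)=(1-1/\ell)^k-1$ at each prime $\ell$ and $\lambda_k(\ell^a)=0$ for $a\ge 2$, so that
$$\left(\frac{\varphi(n)}{n}\right)^k=\sum_{d\mid n}\lambda_k(d).$$
Specializing to $n=p-1$ and exchanging the order of summation gives
$$\sum_{p\le x}\left(\frac{\varphi(p-1)}{p-1}\right)^k=\sum_{d\ge 1}\lambda_k(d)\,\pi(x;d,1),$$
where $\pi(x;d,1)=\#\{p\le x:p\equiv 1\bmod d\}$ and the $d$-sum runs over squarefree $d$.

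Next I would split the $d$-sum at $D=(\log x)^A$ for a fixed large $A$. On the range $d\le D$ the Siegel--Walfisz theorem gives $\pi(x;d,1)=\li(x)/\varphi(d)+O\!\left(x\exp(-c\sqrt{\log x})\right)$ uniformly, so these terms contribute
$$\li(x)\sum_{d\le D}\frac{\lambda_k(d)}{\varphi(d)}+O\!\left(x\exp(-c\sqrt{\log x})\sum_{d\le D}|\lambda_k(d)|\right).$$
Because $\lambda_k$ and $\varphi$ are multiplicative and $\lambda_k$ is supported on squarefree integers, the completed series factors as an Euler product,
$$\sum_{d\ge 1}\frac{\lambda_k(d)}{\varphi(d)}=\prod_{\ell}\left(1+\frac{(1-1/\ell)^k-1}{\ell-1}\right)=\prod_{\ell}\left(1-\frac{\ell^k-(\ell-1)^k}{\ell^k(\ell-1)}\right)=a_k,$$
which is exactly the constant in \eqref{eq4040DP.200fk}. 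Using $\li(x)=x/\log x+O(x/(\log x)^2)$ then yields the claimed main term $a_k\,x/\log x$.

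Finally I would bound the two remaining tails, both of which hinge on the decay of $\lambda_k$: since $|\lambda_k(\ell)|=1-(1-1/\ell)^k\asymp k/\ell$, one has $\sum_{d\le T}|\lambda_k(d)|\ll(\log T)^k$, and partial summation gives $\sum_{d>D}|\lambda_k(d)|/d\ll(\log D)^{k-1}/D$. For $d>D$ I would use the trivial bound $\pi(x;d,1)\le x/d+1$, so that contribution is $\ll x(\log D)^{k-1}/D$; the truncation tail $\li(x)\sum_{d>D}|\lambda_k(d)|/\varphi(d)$ is of the same order. With $D=(\log x)^A$ and $A\ge 3$ this is $O(x/(\log x)^2)$, while the Siegel--Walfisz error is $\ll x\exp(-c'\sqrt{\log x})$ and hence negligible. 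Collecting these estimates establishes the asymptotic formula. The main obstacle is the error analysis rather than the main term: one must pick $D$ large enough to keep the Siegel--Walfisz error uniform, yet control the long tail $d>D$ using only the trivial count, and this balance relies on the precise decay estimate for $\sum_{d>D}|\lambda_k(d)|/d$.
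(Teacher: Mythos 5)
Your argument is correct and essentially complete. Note, however, that the paper does not prove this lemma at all: it is quoted verbatim from Vaughan \cite[Lemma 4.4]{VR1973} (the case $k=1$ going back to Stephens \cite{SP1969}), so there is no internal proof to compare against. What you have written is the standard argument underlying that reference: the convolution identity $\left(\varphi(n)/n\right)^k=\sum_{d\mid n}\lambda_k(d)$ with $\lambda_k$ multiplicative, supported on squarefree integers, and satisfying $\lambda_k(\ell)=(1-1/\ell)^k-1$, followed by an interchange of summation, Siegel--Walfisz on the range $d\leq(\log x)^A$, and trivial bounds plus the decay $|\lambda_k(\ell)|\ll k/\ell$ on the tail. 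Your identification of the Euler product
$\prod_{\ell}\bigl(1+\tfrac{(1-1/\ell)^k-1}{\ell-1}\bigr)$
with the stated constant $a_k$ is a correct algebraic manipulation, and the choice $D=(\log x)^A$ with $A\geq 3$ (absorbing the $(\log\log x)^{k-1}$ factors for fixed $k$) does deliver the error term $O(x/(\log x)^2)$. The only points worth tightening in a written version are (i) the remark that the $d$-sum automatically truncates at $d\leq x$, so the ``$+1$'' in the trivial bound $\pi(x;d,1)\leq x/d+1$ contributes only $O((\log x)^k)$, and (ii) that $\varphi(d)\gg d/\log\log d$ is needed to pass from the tail bound on $\sum_{d>D}|\lambda_k(d)|/d$ to the corresponding bound on $\sum_{d>D}|\lambda_k(d)|/\varphi(d)$; both are harmless. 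Since $k$ is fixed, no uniformity in $k$ is required.
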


The constants of interest in this application are
\begin{equation} \label{eq4040DP.200hk}
	a_1=\prod_{p \geq 2 } \left(1-\frac{1}{p(p-1)}\right)=0.373956099060845279979647798266673361\ldots, 
\end{equation}  
and 
\begin{equation} \label{eq4040DP.200kl}
a_2=\prod_{p \geq 2 } \left(1-\frac{2p-1}{p^2(p-1)}\right)<  0.1473496249460471189049141150422354
\ldots,
\end{equation} 
the numerical approximations are based on the first $10^4$ primes. The first constant $a_1>0$ is known as Artin constant, it is the average density of primes with respect to a random primitive roots modulo $p$, but the second one $a_2>0$ is not well known.  \\

It is clear that for each prime $p\geq2$, the local factors satisfy 

\begin{equation} \label{eq4040DP.200k1}
 1-\frac{2p-1}{p^2(p-1)}<1-\frac{1}{p(p-1)}.
\end{equation} 
In the current applications, there two constants of interest. These are 
\begin{equation} \label{eq4040DP.200k2}
	c_2=\frac{a_1+a_2}{2}<  0.26065286200344619944228095665445442967965\ldots,
\end{equation}
and 
\begin{equation} \label{eq4040DP.200k3}
	c_3=\frac{a_1-a_2}{2}<0.11330323705739908053736684161221893192939\ldots.
\end{equation}

\section{Estimates of Exponential Sums} \label{S2727EES-A}\hypertarget{S2727EES-A}
An estimate of the exponential sum \eqref{eq2727EES.450d} over the set of prime numbers is proved in {\color{red}\cite[Section 2]{ES1957}}. The general version over the set of integers is based on the generalized Gaussian sum
\begin{lem}   \label{lem2727EES.450E}\hypertarget{lem2727EES.450E} Let $N\geq1$ be a large integer and let $U,V\subset \N $ be a pair of subsets of integers of cardinalities $\#U,\#V\leq N$. If $\chi\ne1 $ is a multiplicative character modulo $N$ and \(u+v\ne0\), then
	\begin{equation} \label{eq2727EES.450d}
		\sum _{u\in U,v\in V} \chi(u+v) 
		\ll N^{1/2} \cdot \sqrt{\#U\cdot \#V }.
	\end{equation} 
\end{lem}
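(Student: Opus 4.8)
The plan is to reduce the bilinear sum to a product of two additive character sums by means of the generalized Gaussian sum, and then to finish with a single Cauchy--Schwarz step together with the orthogonality relations for additive characters. Write $S=\sum_{u\in U,\,v\in V}\chi(u+v)$ and $e(y)=e^{2\pi i y}$. The first and decisive step is to expand $\chi$ through its Gauss sum: for a primitive character $\chi$ modulo $N$ one has
\[
\chi(n)=\frac{1}{G(\overline{\chi})}\sum_{t\bmod N}\overline{\chi}(t)\,e\!\left(\frac{nt}{N}\right),
\qquad
G(\overline{\chi})=\sum_{t\bmod N}\overline{\chi}(t)\,e\!\left(\frac{t}{N}\right),
\qquad |G(\overline{\chi})|=N^{1/2}.
\]
Inserting this with $n=u+v$ and separating the two variables turns $S$ into the product form
\[
S=\frac{1}{G(\overline{\chi})}\sum_{t\bmod N}\overline{\chi}(t)
\Bigl(\sum_{u\in U}e\!\left(\tfrac{ut}{N}\right)\Bigr)
\Bigl(\sum_{v\in V}e\!\left(\tfrac{vt}{N}\right)\Bigr),
\]
in which $U$ and $V$ no longer interact.

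Next I would apply Cauchy--Schwarz in the variable $t$, using $|\overline{\chi}(t)|\leq 1$ and $|G(\overline{\chi})|=N^{1/2}$, to bound $|S|$ by $N^{-1/2}$ times the product of $\bigl(\sum_{t}|\sum_{u}e(ut/N)|^2\bigr)^{1/2}$ and the analogous square-root sum over $v$. Each of these two sums is evaluated by the orthogonality of the additive characters: expanding the square and summing over $t$ annihilates every off-diagonal pair and leaves $\sum_{t}|\sum_{u}e(ut/N)|^2=N\cdot\#U$, and likewise $N\cdot\#V$ for the $v$-sum, where one uses that $U$ and $V$ occupy distinct residues modulo $N$, as the hypotheses $\#U,\#V\leq N$ permit. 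Collecting the three factors gives $|S|\leq N^{-1/2}(N\,\#U)^{1/2}(N\,\#V)^{1/2}=N^{1/2}\sqrt{\#U\cdot\#V}$, which is the assertion.

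The step I expect to be the main obstacle is the Gauss sum expansion itself, since the evaluation $|G(\overline{\chi})|=N^{1/2}$ is valid only for a \emph{primitive} character modulo $N$. For prime modulus every nontrivial $\chi$ is automatically primitive, so the applications to $\Z/p\Z$ are immediate; but for the prime-power modulus $\Z/p^2\Z$, and for composite $N$ in general, an imprimitive $\chi$ has a strictly smaller Gauss sum, and one must first descend to the conductor of $\chi$ and check that this reduction does not weaken the final exponent. The Cauchy--Schwarz and orthogonality steps, by contrast, are entirely routine.
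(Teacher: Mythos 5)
Your proposal is essentially identical to the paper's proof: the paper likewise multiplies through by the Gauss sum $\tau(\overline{\chi})$ to separate the variables, applies the Schwarz inequality in the dual variable, and evaluates each diagonal sum as $N\cdot\#U$ and $N\cdot\#V$ by orthogonality of additive characters. The primitivity caveat you flag at the end is a genuine issue, but it is one the paper silently assumes as well (its final step implicitly divides by $|\tau(\overline{\chi})|^{2}=N$ without comment), so your argument matches the paper's in both substance and in its unstated hypotheses.
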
 

\begin{proof}[\textbf{Proof}] Let $\chi\ne1$ be a nontrivial multiplicative character mod $N$ and let $\tau(\chi)=	\sum _{1 \leq k\leq N}\chi(k)e^{i2\pi k/N}$. The product $\tau(\overline{\chi})\chi(t)=	\sum _{1 \leq k\leq N}\overline{\chi}(k)e^{i2\pi kt/N}$, where $\overline{\chi}$ is the complex conjugate. Summing the product $\tau(\overline{\chi})\chi(t)$ over the subsets $U$ and $V$ yields
	\begin{eqnarray} \label{eq2727EES.450i}
		\tau(\overline{\chi})	\sum _{u\in U,v\in V} \chi(u+v) 
		&=&\sum _{0 \leq k< N}\overline{\chi}(k)\sum _{u\in U,v\in V} e^{i2\pi k(u+v)/N}\\[.3cm]
		&=&\sum _{0 \leq k< N}\overline{\chi}(k)\sum _{u\in U} e^{i2\pi ku/N}\sum _{v\in V} e^{i2\pi kv/N}\nonumber.
	\end{eqnarray} 
	Taking absolute values followed by an application of the Schwarz inequality yield
	\begin{eqnarray}\label{eq2727EES.450j}
		|\tau(\overline{\chi})|^2\left|	\sum _{u\in U,v\in V} \chi(u+v) \right|^2
		&\leq&\sum _{0\leq k< N}\left|\sum _{u\in U} e^{i2\pi ku/N}\right|^2\sum _{0 \leq k< N}\left|\sum _{v\in V} e^{i2\pi kv/N}\right|^2.
	\end{eqnarray}
	The first term on the right side has the upper bound
	\begin{eqnarray} \label{eq2727EES.450k}
		\sum _{0 \leq k< N}\left|\sum _{u\in U} e^{i2\pi ku/N}\right|^2
		&=&\sum _{0 \leq k< N}\sum _{\substack{u_0\in U\\u_1\in U}} e^{i2\pi k(u_0-u_1)/N}\\[.3cm]
		&=&\sum _{\substack{u_0\in U\\u_1\in U}} \sum _{0 \leq k<N}e^{i2\pi k(u_0-u_1)/N}\nonumber\\[.3cm]
		&=&N\cdot \#U\nonumber.
	\end{eqnarray}
	The procedure as in \eqref{eq2727EES.450k} yields
	\begin{equation} \label{eq2727EES.450m}
		\sum _{0 \leq k< N}\left|\sum _{v\in V} e^{i2\pi kv/N}\right|^2=N\cdot \#V\nonumber.
	\end{equation}
	Replacing \eqref{eq2727EES.450k} and \eqref{eq2727EES.450m} into \eqref{eq2727EES.450j}
	completes the verification.
\end{proof}

\begin{lem}   \label{lem2727EES.450A}\hypertarget{lem2727EES.900A} Let $N\geq1$ be a large integer and let $U,V\subset \N $ be a pair of subsets of integers of cardinalities $\#U,\#V\leq N$. If $\chi\ne1 $ is an additive character modulo $N$ and \(uv\ne0\), then
	\begin{equation} \label{eq2727EES.450mr}
		\sum _{u\in U,v\in V} \psi(uv) 
		\ll N^{1/2} \cdot \sqrt{\#U\cdot \#V }.
	\end{equation} 
\end{lem}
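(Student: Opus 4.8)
The plan is to follow the same Cauchy--Schwarz and orthogonality scheme used in the proof of \hyperlink{lem2727EES.450E}{Lemma} \ref{lem2727EES.450E}, but since the character $\psi$ is already additive there is no need to introduce a Gauss sum: the additive structure is supplied directly by the orthogonality of the characters $u\mapsto \psi(u\,m)$ over a complete residue system modulo $N$. Write $S=\sum_{u\in U,\,v\in V}\psi(uv)$ and view it as a sum over $u\in U$ of the inner sums $\sum_{v\in V}\psi(uv)$.

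First I would apply the Cauchy--Schwarz inequality in the variable $u$ to obtain
\begin{equation}
|S|^2\le \#U\sum_{u\in U}\left|\sum_{v\in V}\psi(uv)\right|^2 . \nonumber
\end{equation}
Because every summand on the right is nonnegative, the next step is to enlarge the outer index set, replacing $\sum_{u\in U}$ by the complete sum $\sum_{0\le u<N}$; this costs nothing and produces a sum amenable to orthogonality.

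Next I would expand the square and interchange the order of summation, writing
\begin{equation}
\sum_{0\le u<N}\left|\sum_{v\in V}\psi(uv)\right|^2=\sum_{v_0,v_1\in V}\;\sum_{0\le u<N}\psi\!\left(u(v_0-v_1)\right). \nonumber
\end{equation}
Here the inner sum over $u$ is a complete additive character sum: for a nontrivial (primitive) $\psi$ it equals $N$ when $v_0\equiv v_1\bmod N$ and vanishes otherwise, exactly as in the orthogonality computation \eqref{eq2727EES.450k}. Assuming the elements of $V$ are distinct modulo $N$, only the diagonal $v_0=v_1$ survives, so the double sum reduces to $N\cdot\#V$. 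Combining this with the Cauchy--Schwarz bound gives $|S|^2\le \#U\cdot N\cdot\#V$, which is the claimed estimate $S\ll N^{1/2}\sqrt{\#U\cdot\#V}$.

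The argument is almost entirely routine, so the only points needing care are the two places where the hypotheses actually enter. The nontriviality of $\psi$ is essential at the orthogonality step: for the trivial character the inner $u$-sum would be $N$ for every pair, and the bound would fail once $\#U\,\#V>N$. The cleanest version of the diagonal count requires $\psi$ to be primitive (equivalently $\gcd(a,N)=1$ when $\psi(t)=e^{2\pi i at/N}$), which is the relevant case in the applications to $N=p$ and $N=p^2$; for an imprimitive nontrivial character one would have to replace the exact count $\#V$ by the slightly larger number of pairs with $v_0\equiv v_1$ modulo the conductor, and the condition $uv\ne0$ serves to keep the underlying sets inside the unit group so that this distinctness holds. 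I expect the main (and essentially only) obstacle to be bookkeeping these degenerate cases rather than any substantive estimate.
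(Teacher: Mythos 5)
Your proposal is correct and is essentially the argument the paper intends: the paper's own ``proof'' just says the result follows as in Lemma \ref{lem2727EES.450E}, i.e.\ by the same Cauchy--Schwarz and orthogonality scheme, and you have carried that out correctly, rightly noting that the Gauss-sum conversion step is unnecessary since $\psi$ is already additive. Your remarks on nontriviality, primitivity, and the diagonal count are sensible bookkeeping that the paper omits entirely.
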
 

\begin{proof}[\textbf{Proof}] The proof is similar to the previous case.\end{proof}
\section{Main Term} \label{S2727SPRFR-M}\hypertarget{S2727SPRFR-M}
The main term in \eqref{eq2727SPRFR.900m4} corresponds to the finite sum attached to the trivial character $\chi=1$. Using the ratio $\varphi(n)/n\gg (\log\log n)^{-1}>0$ for all integers $n\gg1$, see {\color{red}\cite[Proposition 8.4]{DL2012}}, it easy to compute the lower bound
\begin{equation}
M(x,z_0,z_1)\gg \frac{x z_0 z_1}{\log x \log \log x}. 
\end{equation}
However, a more precise asymptotic formula is computed here. 

\begin{lem} \label{lem2727SPRFR.300-M}\hypertarget{lem2727SPRFR.300-M} Let $x>1$ be a large real number and let $\#U=z_0$ and $\#V=z_1$ be the cardinalities of the subsets of integers $U$ and $V$. Then 
\begin{eqnarray}\label{eq2727SPRFR.300f}
M(x,z_0,z_1)&=&\frac{1}{2}\sum _{x\leq p \leq 2x} \frac{\varphi(p-1)}{p-1}\left( 1+\frac{\varphi(\varphi(p^2))}{p^2}\right) \sum _{ u\in U, v\in V,}  1\nonumber\\[.3cm]
		&=& c_2z_0 z_1\cdot\frac{x}{\log x}\left( 1+O\left(\frac{1 }{\log x}\right)  \right) \nonumber,
	\end{eqnarray}
where $c_2=(a_1+a_2)/2\leq 0.481840$ is the density constant.
\end{lem}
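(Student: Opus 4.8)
The plan is to factor the inner counting sum out of the prime sum. Since $\sum_{u\in U,\,v\in V}1=\#U\cdot\#V=z_0z_1$ is independent of $p$, we have $M(x,z_0,z_1)=\tfrac12 z_0z_1\,S(x)$, where
\begin{equation}\label{eqPLAN.1}
S(x)=\sum_{x\leq p\leq 2x}\frac{\varphi(p-1)}{p-1}\left(1+\frac{\varphi(\varphi(p^2))}{p^2}\right),\nonumber
\end{equation}
so the whole task reduces to an asymptotic evaluation of $S(x)$. I would split $S(x)=S_1(x)+S_2(x)$, where $S_1$ retains the summand $\varphi(p-1)/(p-1)$ and $S_2$ retains the summand $\frac{\varphi(p-1)}{p-1}\cdot\frac{\varphi(\varphi(p^2))}{p^2}$. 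The sum $S_1$ is handled directly by \hyperlink{lem4040DP.200A}{Lemma} \ref{lem4040DP.200A} with $k=1$: writing $F_1(y)=\sum_{p\leq y}\varphi(p-1)/(p-1)$ and differencing, $S_1(x)=F_1(2x)-F_1(x)=a_1\big(\tfrac{2x}{\log 2x}-\tfrac{x}{\log x}\big)+O\big(\tfrac{x}{(\log x)^2}\big)=a_1\tfrac{x}{\log x}\big(1+O(\tfrac{1}{\log x})\big)$.

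The crux is the summand of $S_2$. Here the decisive algebraic identity is that $\varphi(p^2)=p(p-1)$, whence, using $\gcd(p,p-1)=1$ and the multiplicativity of $\varphi$,
\begin{equation}\label{eqPLAN.2}
\varphi(\varphi(p^2))=\varphi\big(p(p-1)\big)=\varphi(p)\,\varphi(p-1)=(p-1)\,\varphi(p-1).\nonumber
\end{equation}
This collapses the summand to $\frac{\varphi(p-1)}{p-1}\cdot\frac{(p-1)\varphi(p-1)}{p^2}=\frac{\varphi(p-1)^2}{p^2}=\big(\frac{\varphi(p-1)}{p-1}\big)^2\big(1-\tfrac1p\big)^2$. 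Since $\big(1-\tfrac1p\big)^2=1+O(1/x)$ uniformly for $p\in[x,2x]$, I would replace this factor by $1$ at the cost of an error $O\big(\tfrac1x\sum_{x\leq p\leq 2x}(\varphi(p-1)/(p-1))^2\big)=O\big(\tfrac{1}{\log x}\big)$, which is of lower order. The remaining main sum $\sum_{x\leq p\leq 2x}(\varphi(p-1)/(p-1))^2$ is then exactly the $k=2$ case of \hyperlink{lem4040DP.200A}{Lemma} \ref{lem4040DP.200A}, and the same dyadic differencing gives $S_2(x)=a_2\tfrac{x}{\log x}\big(1+O(\tfrac{1}{\log x})\big)$.

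Combining, $S(x)=(a_1+a_2)\tfrac{x}{\log x}\big(1+O(\tfrac{1}{\log x})\big)$, so that $M(x,z_0,z_1)=\tfrac12 z_0z_1 S(x)=c_2\,z_0z_1\cdot\tfrac{x}{\log x}\big(1+O(\tfrac{1}{\log x})\big)$ with $c_2=(a_1+a_2)/2$, as claimed. The genuine content of the argument is the identity $\varphi(\varphi(p^2))=(p-1)\varphi(p-1)$, which is precisely what matches the second summand to the $k=2$ totient moment; everything else is bookkeeping. The main technical obstacle I anticipate is purely one of uniformity in the error terms: I must verify that both the dyadic differencing (converting the up-to-$y$ asymptotics of \hyperlink{lem4040DP.200A}{Lemma} \ref{lem4040DP.200A} into the window $[x,2x]$) and the replacement of $(1-1/p)^2$ by $1$ each contribute only a relative error of size $O(1/\log x)$, so that the two pieces assemble cleanly into the single error factor $1+O(1/\log x)$ without the $\log\log$ or $\log$ losses that a cruder bound on $\varphi(p-1)/(p-1)$ would introduce.
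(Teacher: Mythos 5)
Your proposal is correct and follows essentially the same route as the paper: split the sum into the $k=1$ and $k=2$ pieces, apply Lemma \ref{lem4040DP.200A} to each, and reduce the second summand to $\left(\varphi(p-1)/(p-1)\right)^2$ up to an $O(1/p)$ error. The only difference is that you explicitly justify that reduction via the identity $\varphi(\varphi(p^2))=\varphi(p(p-1))=(p-1)\varphi(p-1)$, which the paper states as an unproved simplification.
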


\begin{proof}[\textbf{Proof}] Rearrange it as two separate: 
\begin{eqnarray}\label{eq2727SPRFR.300i}
	2M(x,z_0,z_1)&=& \sum _{x\leq p \leq 2x} \frac{\varphi(p-1)}{p-1}\left( 1+\frac{\varphi(\varphi(p^2))}{p^2}\right) \sum _{ u\in U, v\in V,}  1\\[.3cm]
&=&\sum _{ u\in U, v\in V,}  \sum _{x\leq p \leq 2x} \frac{\varphi(p-1)}{p-1}+\sum _{ u\in U, v\in V,}  \sum _{x\leq p \leq 2x} \frac{\varphi(p-1)}{p-1}\cdot\frac{\varphi(\varphi(p^2))}{p^2}\nonumber\\[.3cm]	
&=&M_0(x,z_0,z_1)\;+\;M_1(x,z_0,z_1)\nonumber.
\end{eqnarray} 
Set $z_0=\#U$ and $z_1=\#V$. Take $k=1$, by \hyperlink{lem4040DP.200A}{Lemma} \ref{lem4040DP.200A} the first subsum
\begin{eqnarray}\label{eq2727SPRFR.300i2}
M_0(x,z_0,z_1)
&=&\sum _{ u\in U, v\in V,}  \sum _{x\leq p \leq 2x} \frac{\varphi(p-1)}{p-1}\\[.3cm]	
&=& z_0 z_1\cdot\sum _{x\leq p \leq 2x} \frac{\varphi(p-1)}{p-1}\nonumber\\[.3cm]	
&=&a_1z_0 z_1\cdot\left(\frac{2x}{\log 2x}-\frac{x}{\log x}+O\left(\frac{x }{(\log x)^2}\right)  \right) \nonumber\\[.3cm]	
&=&a_1z_0z_1\cdot\frac{x}{\log x}\left( 1+O\left(\frac{x }{\log x}\right)  \right) \nonumber.
\end{eqnarray}

Take $k=2$, by \hyperlink{lem4040DP.200A}{Lemma} \ref{lem4040DP.200A} the second subsum 
\begin{eqnarray}\label{eq2727SPRFR.300i4}
	M_1(x,z_0,z_1)
	&=&\sum _{ u\in U, v\in V,}  \sum _{x\leq p \leq 2x} \frac{\varphi(p-1)}{p-1}\cdot\frac{\varphi(\varphi(p^2))}{p^2}\\[.3cm]	
	&=&\sum _{ u\in U, v\in V,}  \sum _{x\leq p \leq 2x} \left(  \frac{\varphi(p-1)}{p-1}\right) ^2+O\left( \sum _{ u\in U, v\in V,}  \sum _{x\leq p \leq 2x} \frac{1}{p}  \right)  \nonumber\\[.3cm]	
	&=&a_2z_0 z_1\cdot\left(\frac{2x}{\log 2x}-\frac{x}{\log x}+O\left(\frac{x }{(\log x)^2}\right)  \right) +O\left(\frac{z_0 z_1 }{\log x}\right)\nonumber\\[.3cm]	
	&=&a_2z_0 z_1\cdot\frac{x}{\log x}\left( 1+O\left(\frac{1 }{\log x}\right)  \right) \nonumber.
\end{eqnarray} 
The second line in \eqref{eq2727SPRFR.300i4} follows from the simplification
\begin{equation}\label{eq2727SPRFR.300i6}
	\frac{\varphi(p-1)}{p-1}\cdot \frac{\varphi(\varphi(p^2))}{p^2}=\left( \frac{\varphi(p-1)}{p-1}\right) ^2+O\left( \frac{1}{p}\right). 
\end{equation} 
Summing \eqref{eq2727SPRFR.300i2} and\eqref{eq2727SPRFR.300i4} completes the verification.
\end{proof}

\section{Estimate of the Error Term} \label{S2727PRFRNL-E}\hypertarget{S2727PRFRNL-E}
The error term $E(x,z)$ in \eqref{eq2727SPRFR.900m4} corresponds to the finite sum attached to the nontrivial characters $\chi\ne1$. An upper bound of this term is computed in this section.

\begin{lem}  \label{lem2727SPRFR.550-E}\hypertarget{lem2727SPRFR.550-E} Let $x>1$ be a large real number and let $\#U=z_0$ and $\#V=z_1$ be the cardinalities of the subsets of integers $U$ and $V$. If $\chi_1\ne1$ and $\chi_2\ne1$ are multiplicative characters of orders $d_1\mid \varphi(p)$ and $d_2\mid \varphi(p^2)$ respectively, then 
\begin{eqnarray} \label{eq2727SPRFR.550d}
E(x,z)
&=&\sum _{ u\in U, v\in V,}  \sum _{x\leq p \leq 2x} \left (\frac{\varphi(p-1)}{p-1}\sum _{\substack{d_1\mid \varphi(p)\\d_1>1}} \frac{\mu(d_1)}{\varphi(d_1)}\sum _{\ord \chi_1 =d_1} \chi_1(u) \right ) \\[.3cm]
	&&\hskip 1 in +\sum _{ u\in U, v\in V,}  \sum _{x\leq p \leq 2x} \left (\frac{\varphi(p-1)}{p-1}\sum _{\substack{d_1\mid \varphi(p)\\d_1>1}} \frac{\mu(d_1)}{\varphi(d_1)}\sum _{\ord \chi_1 =d_1} \chi_1(u) \right )      \nonumber\\[.3cm]
	&&\hskip 1 in \times \left (\frac{\varphi(\varphi(p^2))}{p^2}\sum _{\substack{d_2\mid \varphi(p^2)\\d_2>1}} \frac{\mu(d_2)}{\varphi(d_2)}\sum _{\ord \chi_2 =d_2} \chi_2(u) \right ) \nonumber\\[.3cm]
&\ll& \frac{x\cdot \sqrt{z_0 z_1 }}{\log x} \cdot p^{1/2+\varepsilon} \nonumber,
\end{eqnarray}
where $\varepsilon>0$ is a small real number.
\end{lem}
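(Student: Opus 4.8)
```latex
\textbf{Proof proposal.} The plan is to bound the error term $E(x,z)$ by isolating each multiplicative-character sum over the box $U\times V$ and applying the Gaussian-sum estimate of \hyperlink{lem2727EES.450E}{Lemma} \ref{lem2727EES.450E}. The overall structure exploits the fact that the inner character sums $\sum_{u\in U}\chi_1(u)$ and $\sum_{v\in V}\chi_2(v)$ factor out of the prime sum, so that after pulling absolute values inside, the problem reduces to (a) a character-sum cancellation estimate in $u,v$ and (b) a bound on the accompanying totient-ratio sum over primes $p\in[x,2x]$.

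First I would treat the two summands in \eqref{eq2727SPRFR.550d} separately. For the first summand, I would interchange the order of summation to write it as
\begin{equation}
\sum_{x\leq p\leq 2x}\frac{\varphi(p-1)}{p-1}\sum_{\substack{d_1\mid\varphi(p)\\ d_1>1}}\frac{\mu(d_1)}{\varphi(d_1)}\sum_{\ord\chi_1=d_1}\Bigl(\sum_{u\in U}\chi_1(u)\Bigr)\Bigl(\sum_{v\in V}1\Bigr),\nonumber
\end{equation}
then apply \hyperlink{lem2727EES.450E}{Lemma} \ref{lem2727EES.450E} (with $N=p$, $V$ replaced by a trivial factor) to bound $\bigl|\sum_{u\in U}\chi_1(u)\bigr|\ll p^{1/2}\sqrt{\#U}$. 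Combined with the character-counting estimate \eqref{eq2727EES.400i}, namely $\sum_{1<d\mid n}|\mu(d)/\varphi(d)|\,|\sum_{\ord\chi=d}\chi(u)|\ll n^{\varepsilon}$, this collapses the $d_1$-sum at cost $p^{\varepsilon}$. The totient ratio $\varphi(p-1)/(p-1)\leq 1$ is harmless, and summing over $p\in[x,2x]$ contributes the prime-counting factor $\pi(2x)-\pi(x)\ll x/\log x$. This yields a bound of order $(x/\log x)\,\sqrt{z_0 z_1}\,p^{1/2+\varepsilon}$ for the first summand.

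For the second summand, which carries the \emph{product} of two character sums, I would factor it as $\bigl(\sum_{u\in U}\chi_1(u)\bigr)\bigl(\sum_{v\in V}\chi_2(v)\bigr)$ after the interchange, apply \hyperlink{lem2727EES.450E}{Lemma} \ref{lem2727EES.450E} to each factor with moduli $p$ and $p^2$ respectively, and again absorb both $d_1$- and $d_2$-sums using \eqref{eq2727EES.400i} at cost $p^{\varepsilon}$. The factor $\varphi(\varphi(p^2))/p^2\leq 1$ contributes nothing harmful. The governing size is still $p^{1/2+\varepsilon}$ rather than $p^{1+\varepsilon}$ because the box $U\times V$ has cardinality $z_0 z_1$ with $z_0,z_1\approx z=p^{1/2+\varepsilon}\log p$, so the square-root saving $\sqrt{\#U\cdot\#V}=\sqrt{z_0 z_1}$ is exactly what appears in the stated bound; the $p^{1/2}$ from each Gaussian sum must be balanced against the box size so that the dominant term does not exceed $p^{1/2+\varepsilon}$ per character sum.

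The main obstacle I anticipate is the careful bookkeeping of the two different moduli $p$ and $p^2$ in the product term: one must verify that applying \hyperlink{lem2727EES.450E}{Lemma} \ref{lem2727EES.450E} with $N=p^2$ does not inflate the estimate to $p^{1+\varepsilon}$, and that the combination of the two square-root savings genuinely yields the single factor $p^{1/2+\varepsilon}$ claimed in \eqref{eq2727SPRFR.550d} rather than a worse power. The resolution should come from noting that $U,V\subset[1,2z]$ with $z\approx p^{1/2}$, so $\#U,\#V\ll p^{1/2+\varepsilon}$, whence $p^{1/2}\sqrt{\#U}\ll p^{1/2}\cdot p^{1/4+\varepsilon}$ and the effective exponent is controlled by balancing the Gaussian-sum modulus against the box dimension; the condition $z=p^{1/2+\varepsilon}\log p$ is precisely calibrated so that the error term stays below the main term of \hyperlink{lem2727SPRFR.300-M}{Lemma} \ref{lem2727SPRFR.300-M}, which is of order $z_0 z_1\cdot x/\log x$.
```
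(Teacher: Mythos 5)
Your decomposition of $E(x,z)$ into the two summands matches the paper's split $E=E_0+E_1$ (Lemmas \ref{lem2727SPRFR.550B} and \ref{lem2727SPRFR.550C}), and Lemma \ref{lem2727EES.450E} is indeed the key tool. But there is a genuine gap in how you apply it. The character sums in this method are evaluated at $u+v$ (the lemma statement's ``$\chi_1(u)$'' is a typo; the sub-lemmas and both theorem proofs use $\chi_1(u+v)$), and the whole point is that $\sum_{u\in U,v\in V}\chi(u+v)$ is a genuine bilinear sum that does \emph{not} factor. Lemma \ref{lem2727EES.450E} applied to this double sum gives $p^{1/2}\sqrt{\#U\cdot\#V}$, which beats the trivial bound $\#U\cdot\#V$ precisely because $z_0z_1\approx p^{1+2\varepsilon}(\log p)^2>p$. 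Your factored version loses this: the one-variable bound $\bigl|\sum_{u\in U}\chi_1(u)\bigr|\ll p^{1/2}\sqrt{\#U}$ is \emph{worse} than the trivial bound $\#U$ whenever $\#U<p$ (here $\#U\approx p^{1/2}$), and multiplying by $\sum_{v\in V}1=z_1$ yields $p^{1/2}\sqrt{z_0}\,z_1$, which exceeds the target $p^{1/2}\sqrt{z_0z_1}$ by a factor $\sqrt{z_1}\approx p^{1/4}$. With that loss the error term swamps the main term $z_0z_1\,x/\log x$ and the argument collapses.

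The same problem is compounded in the product term. You cannot apply the bilinear estimate to both factors $\chi_1(u+v)$ and $\chi_2(u+v)$, since they run over the \emph{same} pair $(u,v)$ --- there is only one square-root saving available in the box $U\times V$ --- and applying Lemma \ref{lem2727EES.450E} with $N=p^2$ to a box of side $\approx p^{1/2}$ gives $p\sqrt{z_0z_1}\approx p^{3/2}$, far worse than trivial. You correctly flag this as the main obstacle, but the proposed resolution (balancing the box size against the modulus) does not repair it. The paper's route in Lemma \ref{lem2727SPRFR.550C} is: apply the bilinear bound once, to the mod-$p$ factor, obtaining $2^{\omega(p-1)}p^{1/2}\sqrt{z_0z_1}$, and bound the mod-$p^2$ factor trivially by $\ll 2^{\omega(\cdot)}\ll p^{\varepsilon}$ via \eqref{eq2727EES.400i}; the totient ratios then contribute the $x/((\log x)(\log\log x)^2)$ factor over the primes, and the whole term is absorbed into the $E_0$ bound. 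You should restructure both estimates around the unfactored double sum.
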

\begin{proof}[\textbf{Proof}] The error term is estimated as a sum $E(x,z)= E_{0}(x,z)\;+\;E_{1}(x,z)$ of two simpler suberror terms.\\

	The first suberror term $E_0(x)$ is estimated in \hyperlink{lem2727SPRFR.550B}{Lemma} \ref{lem2727SPRFR.550B} and the second suberror term $E_1(x)$ is estimated in \hyperlink{lem2727SPRFR.550C}{Lemma} \ref{lem2727SPRFR.550C}.  Summing these estimates yields
	\begin{eqnarray} \label{eq2727SPRFR.550f}
		E(x)&=& E_{0}(x)\;+\;E_{1}(x)   \\[.3cm]
		&\ll& \frac{x\cdot \sqrt{z_0 z_1 }}{\log x} \cdot p^{1/2+\varepsilon} \;+\; \frac{xz_0 z_1}{(\log x)(\log \log x)^2} \cdot 2^{\omega(p-1)+\omega(p^2)}p^{1/2}\nonumber\\[.3cm]
		&\ll& \frac{x}{\log x} \cdot p^{1/2+\varepsilon}\cdot \sqrt{z_0 z_1 }\nonumber,
	\end{eqnarray}
	where $\varepsilon>0$ is a small real number. This completes the estimate of the error term .
\end{proof}

\begin{lem}  \label{lem2727SPRFR.550B}\hypertarget{lem2727SPRFR.550B} Let $x>1$ be a large real number and let $\#U=z_0$ and $\#V=z_1$ be the cardinalities of the subsets of integers $U,V\subset \Z$. If the functions $\chi_1\ne1$ are multiplicative characters of orders $d\mid \varphi(p)$, then 
	\begin{eqnarray} \label{eq2727SPRFR.400ms}
		E_0(x,z_0,z_1)
		&=&\sum _{ u\in U, v\in V,}  \sum _{x\leq p \leq 2x} \left (\frac{\varphi(p-1)}{p-1}\sum _{\substack{d_1\mid \varphi(p)\\d_1>1}} \frac{\mu(d_1)}{\varphi(d_1)}\sum _{\ord \chi_1 =d_1} \chi_1(u+v) \right ) \nonumber\\[.3cm]
				&\ll&\frac{x\cdot \sqrt{z_0 z_1 }}{\log x} \cdot p^{1/2+\varepsilon}  ,
	\end{eqnarray}
where $\varepsilon>0$ is a small real number. 	 
\end{lem}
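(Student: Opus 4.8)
The plan is to estimate $E_0(x,z_0,z_1)$ by interchanging the order of summation so that the character sum over the two integer variables $u\in U$ and $v\in V$ is isolated, allowing me to apply the Gaussian-sum bound of \hyperlink{lem2727EES.450E}{Lemma} \ref{lem2727EES.450E}. First I would pull the sum over $p$ and the divisor sum over $d_1$ to the outside, writing
\begin{equation}
E_0(x,z_0,z_1)=\sum _{x\leq p \leq 2x}\frac{\varphi(p-1)}{p-1}\sum _{\substack{d_1\mid \varphi(p)\\d_1>1}} \frac{\mu(d_1)}{\varphi(d_1)}\sum _{\ord \chi_1 =d_1}\;\sum _{ u\in U, v\in V} \chi_1(u+v)\nonumber.
\end{equation}
The innermost double sum $\sum_{u\in U,v\in V}\chi_1(u+v)$ is exactly the object controlled by \hyperlink{lem2727EES.450E}{Lemma} \ref{lem2727EES.450E} with $N$ taken to be the modulus $p$ (so $N\leq 2x$), giving the uniform bound $\ll p^{1/2}\sqrt{\#U\cdot\#V}=p^{1/2}\sqrt{z_0z_1}$ for each nontrivial $\chi_1$.

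Next I would bound the remaining weights. The totient ratio satisfies $\varphi(p-1)/(p-1)\leq 1$ trivially, and the character-order sum is handled by the same mechanism used in \eqref{eq2727EES.400i}: summing $|\mu(d_1)/\varphi(d_1)|$ times the number of characters of each order $d_1$ collapses, after triangle inequality, into the divisor-type sum $\sum_{1<d_1\mid \varphi(p)}|\mu(d_1)|=2^{\omega(\varphi(p))}-1\ll p^{\varepsilon}$ by the standard omega-function estimate recalled in \hyperlink{S4545EOFW-W}{Section} \ref{S4545EOFW-W}. Combining these, each prime $p$ contributes $\ll p^{1/2}\sqrt{z_0z_1}\cdot p^{\varepsilon}=p^{1/2+\varepsilon}\sqrt{z_0z_1}$, after absorbing the $2^{\omega}$ factor into the $p^{\varepsilon}$.

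Finally I would sum over the primes $x\leq p\leq 2x$. Since there are $\ll x/\log x$ such primes by the prime number theorem (Chebyshev suffices), and each contributes the same order $p^{1/2+\varepsilon}\sqrt{z_0z_1}\ll x^{1/2+\varepsilon}\sqrt{z_0z_1}$ uniformly on the dyadic block, I obtain
\begin{equation}
E_0(x,z_0,z_1)\ll \frac{x}{\log x}\cdot p^{1/2+\varepsilon}\cdot\sqrt{z_0z_1}\nonumber,
\end{equation}
which is the claimed estimate; here I follow the paper's convention of writing $p^{1/2+\varepsilon}$ for the per-prime size on the interval $[x,2x]$ rather than fully replacing it by $x^{1/2+\varepsilon}$.

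The main obstacle, and the step deserving the most care, is the uniformity in the character-order bound: I must ensure that the factor $p^{\varepsilon}$ genuinely dominates $2^{\omega(\varphi(p))}$ uniformly over all $p\in[x,2x]$, and that applying \hyperlink{lem2727EES.450E}{Lemma} \ref{lem2727EES.450E} with modulus $N=p$ is legitimate for every nontrivial $\chi_1$ of order $d_1\mid\varphi(p)$ regardless of how large the conductor is, with the hypothesis $u+v\ne 0$ verified for the integer sets $U,V$ in question. The interchange of summation and the final summation over primes are routine, but packaging the three separate contributions so that the stray totient ratios and the $1/\varphi(d_1)$ weights do not degrade the exponent is where the bookkeeping must be handled attentively.
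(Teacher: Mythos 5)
Your argument is essentially identical to the paper's own proof: both take absolute values (equivalently, interchange summation), apply Lemma \ref{lem2727EES.450E} with modulus $N=p$ to the inner sum $\sum_{u\in U,v\in V}\chi_1(u+v)$ to get $p^{1/2}\sqrt{z_0z_1}$, absorb the character-order weights into $\sum_{d_1\mid\varphi(p)}|\mu(d_1)|=2^{\omega(\varphi(p))}\ll p^{\varepsilon}$, and finish with $\pi(2x)-\pi(x)\ll x/\log x$. The cautionary points you raise at the end (uniformity of $2^{\omega(\varphi(p))}\ll p^{\varepsilon}$ and the $u+v\ne0$ hypothesis) are legitimate but are likewise left implicit in the paper, so no further changes are needed.
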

\begin{proof}[\textbf{Proof}]Taking absolute values and applying \hyperlink{lem2727EES.900E}{Lemma} \ref{lem2727EES.450E} yield
\begin{eqnarray} \label{eq2727SPRFR.400m2}
|E_0(x,z_0,z_1)|
&\leq&\sum _{x\leq p \leq 2x} \frac{\varphi(p-1)}{p-1}\sum _{\substack{d_1\mid \varphi(p)\\d_1>1}}\left| \frac{\mu(d_1)}{\varphi(d_1)}\right |\sum _{\ord \chi_1 =d_1} \left| \sum _{ u\in U, v\in V,} \chi_1(u+v) \right |\nonumber \\[.3cm]
&\leq&\sum _{x\leq p \leq 2x} 1\sum _{d_1\mid \varphi(p)}\left| \mu(d_1)\right |\cdot  \left| \sum _{ u\in U, v\in V,} \chi_1(u) \right |\nonumber \\[.3cm]
	&\ll&  p^{1/2} \cdot \sqrt{\#U\cdot \#V }\sum _{x\leq p \leq 2x} 1\sum _{d_1\mid \varphi(p)}\left| \mu(d_1)\right |.
\end{eqnarray} 
The last step follows from $\pi(2x)-\pi(x)\ll x(\log x)^{-1}$ and $\sum _{d\mid n}\left| \mu(d)\right |\ll n^{\varepsilon}$.
\end{proof}
\begin{lem}   \label{lem2727SPRFR.550C}\hypertarget{lem2727SPRFR.550C}  Let $x>1$ be a large real number and let $\#U=z$ and $\#V=z$ be the cardinalities of the subsets of integers $U$ and $V$. If the functions $\chi_i\ne1$ are multiplicative characters of orders $d\mid \varphi(p)$ or $d\mid \varphi(p^2)$, then 
\begin{eqnarray} \label{eq2727SPRFR.400m4}
	E_1(x,z_0,z_1)
	&=&\sum _{ u\in U, v\in V,} \sum _{x\leq p \leq 2x} \left (\frac{\varphi(p-1)}{p-1}\sum _{\substack{d_1\mid \varphi(p)\\d_1>1}} \frac{\mu(d_1)}{\varphi(d_1)}\sum _{\ord \chi_1 =d_1} \chi_1(u+v) \right )   \nonumber   \\[.3cm]
	&&\hskip 1 in \times \left (\frac{\varphi(\varphi(p^2))}{p^2}\sum _{\substack{d_2\mid \varphi(p^2)\\d_2>1}} \frac{\mu(d_2)}{\varphi(d_2)}\sum _{\ord \chi_2 =d_2} \chi_2(u+v) \right ) \nonumber\\[.3cm]
 &\ll&  \frac{x\sqrt{z_0z_1}}{(\log x)(\log \log x)^2} \cdot p^{1/2+\varepsilon},
\end{eqnarray}
where $\varepsilon>0$ is a small real number.
\end{lem}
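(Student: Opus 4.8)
The plan is to imitate the proof of Lemma~\ref{lem2727SPRFR.550B} essentially line by line, the only new ingredient being that each summand of \eqref{eq2727SPRFR.400m4} now carries a \emph{product} of two inner character sums rather than a single one. First I would pass to absolute values, use $|\mu(d_1)|,|\mu(d_2)|\le 1$ together with the trivial bounds $\varphi(p-1)/(p-1)\le 1$ and $\varphi(\varphi(p^2))/p^2\le 1$, and interchange the finitely many orders of summation so that the bilinear sum $\sum_{u\in U,\,v\in V}$ sits innermost, adjacent to the characters. After this rearrangement the quantity to be controlled, for each prime $p$, each squarefree pair $(d_1,d_2)$ and each pair $(\chi_1,\chi_2)$ of characters of those orders, is $\bigl|\sum_{u\in U,\,v\in V}\chi_1(u+v)\chi_2(u+v)\bigr|$.

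The next step is to merge the two characters. Since $\chi_1$ is a character modulo $p$ and $\chi_2$ a character modulo $p^2$, lifting $\chi_1$ through the reduction $(\Z/p^2\Z)^{\times}\to(\Z/p\Z)^{\times}$ makes $\chi:=\chi_1\chi_2$ a single character modulo $p^2$, and the hypotheses $d_1>1$, $d_2>1$ force $\chi\ne 1$. Then $\chi_1(u+v)\chi_2(u+v)=\chi(u+v)$, and Lemma~\ref{lem2727EES.450E}, applied to the pair of sets $U,V$, bounds $\bigl|\sum_{u,v}\chi(u+v)\bigr|$. To reassemble I would use that the number of characters of a fixed order $d$ is $\varphi(d)$, so the Möbius weights telescope, $\sum_{d\mid p-1}\frac{|\mu(d)|}{\varphi(d)}\sum_{\ord\chi=d}1=\sum_{d\mid p-1}|\mu(d)|=2^{\omega(p-1)}$, and likewise $2^{\omega(\varphi(p^2))}$ for the $d_2$-sum; by \eqref{eq4545EOFW400j} both are $\ll p^{\varepsilon}$ and are absorbed into the exponent. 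The two totient ratios $\varphi(p-1)/(p-1)$ and $\varphi(\varphi(p^2))/p^2$ are each $\le 1$ and of typical size $(\log\log x)^{-1}$, which is what records the factor $(\log\log x)^{-2}$ in the statement. Summing over $x\le p\le 2x$ with $\pi(2x)-\pi(x)\ll x/\log x$ then assembles a bound of the shape $E_1\ll \frac{x\sqrt{z_0z_1}}{(\log x)(\log\log x)^2}\,p^{\beta+\varepsilon}$ for some exponent $\beta$.

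The main obstacle is precisely the value of $\beta$. Writing $d_2=e$ or $d_2=pe$ with $e\mid p-1$ squarefree, the characters $\chi_2$ split into two kinds. When $p\nmid d_2$ the merged character $\chi$ is induced from a character modulo $p$, so $\sum_{u,v}\chi(u+v)$ is genuinely a sum modulo $p$ and Lemma~\ref{lem2727EES.450E} with $N=p$ delivers the desired $p^{1/2+\varepsilon}\sqrt{z_0z_1}$. When $p\mid d_2$, however, $\chi_2$, and hence $\chi$, is \emph{primitive} modulo $p^2$, and the Gauss-sum method behind Lemma~\ref{lem2727EES.450E} only saves $(p^2)^{1/2}=p$; worse, the extra factor $1/\varphi(pe)\asymp 1/p$ in the weight is exactly offset by the $\varphi(pe)\asymp p$ characters of that order, so summing over the primitive block produces no saving and leaves $\beta=1$. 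Reconciling this honest $\beta=1$ with the claimed $\beta=1/2$ is the crux of the lemma.

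To overcome it I would abandon the per-character estimate on the primitive block and argue in mean square. Writing $\sum_{u,v}\chi(u+v)=\sum_n r(n)\chi(n)$ with $r(n)=\#\{(u,v)\in U\times V:u+v=n\}$, and noting that every $n=u+v$ satisfies $n<p^2$, orthogonality of the characters modulo $p^2$ collapses $\sum_{\chi\bmod p^2}\bigl|\sum_n r(n)\chi(n)\bigr|^2$ to $\varphi(p^2)\sum_n r(n)^2$. A Cauchy--Schwarz step over the $\asymp p$ primitive characters of order $pe$, combined with the weight $1/\varphi(pe)\asymp 1/p$, then trades the pointwise factor $p$ for the mean factor $(\sum_n r(n)^2)^{1/2}$, which is small exactly when the additive energy of the decomposition $U+V$ is close to diagonal. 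Making this energy bound quantitative---equivalently, choosing the splitting $g=u+v$ so that $U+V$ has few repeated representations---is, I expect, the real content needed to legitimately reach the exponent $1/2+\varepsilon$ recorded in \eqref{eq2727SPRFR.400m4}; the remaining bookkeeping is identical to that of Lemma~\ref{lem2727SPRFR.550B}.
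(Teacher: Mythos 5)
Your proposal does not reach the stated bound: it ends by deferring the decisive step. After merging $\chi_1\chi_2$ into a single character modulo $p^2$ you correctly observe that Lemma \ref{lem2727EES.450E} with $N=p^2$ only saves $(p^2)^{1/2}=p$ on the block where $p\mid d_2$, so the per-character route gives $\beta=1$; the mean-square/additive-energy repair you then sketch is explicitly left conditional on an energy bound for $U+V$ that you do not establish, and which is not available for free --- for $U,V$ intervals of length $z$ one has $\sum_n r(n)^2\asymp z^3$, and feeding that into your Cauchy--Schwarz step still leaves a surplus factor of order $z^{1/2}$ over the target $p^{1/2}\sqrt{z_0z_1}$. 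There is also a smaller defect in the merging step: $d_1>1$ and $d_2>1$ do not force $\chi_1\chi_2\ne 1$, since $\chi_2$ may be the lift of $\overline{\chi_1}$ to modulus $p^2$; those diagonal pairs contribute $\asymp z_0z_1$ each with no cancellation, so the reduction to Lemma \ref{lem2727EES.450E} is not even uniformly legitimate. As written, your argument yields at best $E_1\ll \frac{x\sqrt{z_0z_1}}{(\log x)(\log\log x)^2}\,p^{1+\varepsilon}$, not the claimed $p^{1/2+\varepsilon}$.

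The paper's own proof takes a different and simpler route that sidesteps your obstacle entirely: it never applies the bilinear estimate at modulus $p^2$. In \eqref{eq2727SPRFR.400k}--\eqref{eq2727SPRFR.400m} the whole $\chi_2$-block is bounded pointwise and trivially,
\begin{equation*}
\Bigl|\sum _{\substack{d_2\mid \varphi(p^2)\\ d_2>1}} \frac{\mu(d_2)}{\varphi(d_2)}\sum _{\ord \chi_2 =d_2} \chi_2(u+v)\Bigr|\;\leq\;\sum _{d_2\mid \varphi(p^2)}|\mu(d_2)|\;=\;2^{\omega(\varphi(p^2))}\;\ll\; p^{\varepsilon},
\end{equation*}
and Lemma \ref{lem2727EES.450E} is reserved for the modulus-$p$ sum $\sum_{u,v}\chi_1(u+v)\ll p^{1/2}\sqrt{z_0z_1}$; the entire $p^{1/2}$ saving comes from $\chi_1$ alone, and the primitive characters modulo $p^2$ are never estimated nontrivially. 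That substitution is what your write-up needs. (You would still inherit the paper's own looseness here --- the bilinear lemma is invoked after absolute values have already been moved inside the $u,v$-sum --- but that is an issue with the source's bookkeeping, not with the strategy of trivially absorbing the mod-$p^2$ factor into $p^{\varepsilon}$.)
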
 
\begin{proof}[\textbf{Proof}] Rearranging it and taking absolute values yield
	\begin{eqnarray} \label{eq2727SPRFR.400k}
		|E_1|
		&\leq& \sum _{x\leq p \leq 2x} \frac{\varphi(p-1)}{p-1}\frac{\varphi(\varphi(p^2))}{p^2}  \\[.3cm]
		&&\times \sum _{ u\in U, v\in V,} \left|\sum _{\substack{d_1\mid \varphi(p)\\d_1>1}} \frac{\mu(d_1)}{\varphi(d_1)}\sum _{\ord \chi_1 =d_1} \chi_1(u+v)\right| \left|\sum _{\substack{d_2\mid \varphi(p^2)\\d_2>1}} \frac{\mu(d_2)}{\varphi(d_2)}\sum _{\ord \chi_2 =d_2} \chi_2(u+v) \right|\nonumber.
	\end{eqnarray} 

An application of \hyperlink{lem2727EES.450E}{Lemma} \ref{lem2727EES.450E} and using the trivial bound yield
\begin{eqnarray} \label{eq2727SPRFR.400m}
|E_1|
&\ll& \sum _{x\leq p \leq 2x} \frac{\varphi(p-1)}{p-1}\frac{\varphi(\varphi(p^2))}{p^2}\cdot  \left(2^{\omega(p-1)}p^{1/2}\sqrt{\#U\cdot \#V }\right)  \left( 2^{\omega(p^2)}\right) \\[.3cm]
&\ll&  \left( \frac{x}{\log x)(\log \log x)^2}\right) \cdot 2^{\omega(p-1)+\omega(p^2)} \cdot p^{1/2}\cdot \sqrt{\#U\cdot \#V } \nonumber.
\end{eqnarray} 
Lastly, plug the estimate $2^{\omega(p-1)+\omega(p^2)}\ll p^{\varepsilon}$, see \eqref{eq4545EOFW400j}, and $z_0=\#U$ and $z_1=\#V$ to complete the verification.
\end{proof}

\section{The Least Stationary Primitive Roots } \label{S2727SPRFR-SPR}\hypertarget{S2727SPRFR-SPR}
The proof for the least stationary primitive root $g_s(p)$ modulo $p$ is essentially the same as that pioneered by Vinogradov and later authors. Excellent introductions to this technique appears in \cite{ES1957}, \cite{WR2001}, et cetera.

\begin{proof}[{\color{blue}\normalfont Proof of \hyperlink{thm2727SPRFR.050}{Theorem} \ref{thm2727SPRFR.050}}] Let $U,V\subset \Z$ be a pair of subsets of integers of cardinality $z_0=\#U$ and $z_1=\#V$ respectively. Let $g=u+v$ be a primitive root $\bmod p $, where $u\in U$ and $v\in V$. Suppose that there is no primes $p\in[x,2x]$ and there is no primitive root $g=u+v$ such that the primitive root $g\bmod p $ can be lifted to a primitive root $g\bmod p^2 $ --- equivalently, there is no stationary primitive roots $g_s(p)\leq z_0+z_1$. Summing the characteristic function over the domain $[x,2x]\times U\times V$ yields the nonexistence equation
	
	\begin{eqnarray} \label{eq2727SPRFR.050m2}
		N_2(x,z_0,z_1)&=&\frac{1}{2}\sum_{ u\in U, v\in V,} \sum_{x\leq p \leq 2x} \Psi_{p} (u+v)\left( 1+\Psi_{p^2} (u+v)\right)\\[.3cm]
		&=&\frac{1}{2}\sum_{ u\in U, v\in V,} \sum_{x\leq p \leq 2x} \Psi_{p} (u+v) 
		+\frac{1}{2}\sum_{ u\in U, v\in V,} \sum_{x\leq p \leq 2x}\Psi_{p} (u+v)\Psi_{p^2} (u+v)\nonumber\\[.3cm]
		&=&0\nonumber.
	\end{eqnarray}
	Replacing the characteristic functions, \hyperlink{lem2727SPRFR.100-C4}{Lemma} \ref{lem2727SPRFR.100-C4} into the nonexistence equation \eqref{eq2727SPRFR.050m2} and expanding it yield
	
	\begin{eqnarray} \label{eq2727SPRFR.050m4}
		0&=&\frac{1}{2}\sum_{ u\in U, v\in V,}  \sum_{x\leq p \leq 2x} \Psi_{p} (u+v)\Psi_{p^2} (u+v)  \\[.3cm]
		&=&\frac{1}{2}\sum_{ u\in U, v\in V,}  \sum_{x\leq p \leq 2x} \left (\frac{\varphi(p-1)}{p-1}
		\sum_{d_1\mid \varphi(p)} \frac{\mu(d_1)}{\varphi(d_1)}\sum _{\ord \chi_1 =d_1} \chi_1(u+v) \right )  \nonumber\\[.3cm]
		&&\hskip .5 in+\frac{1}{2}\sum_{ u\in U, v\in V,}  \sum_{x\leq p \leq 2x} \left (\frac{\varphi(p-1)}{p-1}\sum _{d_1\mid \varphi(p)} \frac{\mu(d_1)}{\varphi(d_1)}\sum_{\ord \chi_1 =d_1} \chi_1(u+v) \right )      \nonumber\\[.3cm]
		&&\hskip .5 in \times \left (\frac{\varphi(\varphi(p^2))}{p^2}\sum_{d_2\mid \varphi(p^2)} \frac{\mu(d_2)}{\varphi(d_2)}\sum _{\ord \chi_2 =d_2} \chi_2(u+v) \right ) \nonumber\\[.3cm]
		&=&M(x,z_0,z_1)\; +\; E(x,z_0,z_1),\nonumber
	\end{eqnarray} 
	where $M(x,z_0,z_1)$ is the main term specified by the trivial character $\chi=1$ and $E(x,z_0,z_1)$ is the error term specified by the nontrivial characters $\chi\ne$.\\

Set $\#U=\#V=z$ and rearranging the equation as $-E(x,z_0,z_1)=M(x,z_0,z_1)$. Next taking absolute value and applying \hyperlink{lem2727SPRFR.300-M}{Lemma} \ref{lem2727SPRFR.300-M} to the main term and \hyperlink{lem2727SPRFR.550-E}{Lemma} \ref{lem2727SPRFR.550-E} to the error term $E(x,z_0,z_1)$ yield
	\begin{eqnarray} \label{eq2727SPRFR.050m6}
		\left|-E(x,z_0,z_1)\right|&\geq &M(x,z_0,z_1)	\nonumber\\[.3cm]
\frac{x}{\log x} \cdot p^{1/2+\varepsilon}\cdot \sqrt{z_0  z_1 }&\gg&z_0 z_1\cdot\frac{x}{\log x}	\\[.3cm]
p^{1/2+\varepsilon}	&\gg&z	\nonumber.
\end{eqnarray} 
Setting $z=p^{1/2+\varepsilon}\log p$ contradicts the hypothesis \eqref{eq2727SPRFR.050m2} for all prime numbers $p \in [x,2x]$. Therefore, the least stationary  primitive root $g_s(p)\in \Z/p\Z$ satisfies the inequality 
\begin{equation}\label{eq2727SPRFR.050m8}
	g_s(p)\ll p^{1/2+\varepsilon}\log p
\end{equation} 
for all primes $p\in[x,2x]$ as $x\to \infty$.
\end{proof}

This technique is limited to small improvements as described in {\color{red}\cite[Section 4]{ES1957}}. However, an application of the Vinogradov and Burgess method, this can reduced to $g_s(p)\ll p^{1/4+\varepsilon}$, see \cite{CS1974}. \\

The explicit upper bound $h(p)<p^{0.99}$ for the least primitive root in $\Z/p^2\Z$ was proved in \cite{KT2019}, and recently improved to $h(p)<p^{2/3}$ in {\color{red}\cite[Theorem 4]{MS2022}}.\\

The actual upper bound $g_s(p)\leq h(p)$ of the least stationary primitive root is significantly smaller than \eqref{eq2727SPRFR.050m8}. In fact, the average value 
\begin{equation} \label{eq2727SPRFR.050ia}
	\pi(x)^{-1}\sum_{p\leq x}h(p)\ll (\log x)^{3+\varepsilon}, 
\end{equation}
of the least primitive root modulo $p^2$, which is computed in \cite{BD1971}, is closer to the actual value. Different techniques are needed to lower the upper bound \eqref{eq2727SPRFR.050m8} to the actual value.

\begin{conj}\label{conj2727SPRFR.050}\hypertarget{conj2727SPRFR.050} The least stationary primitive root satisfies the inequality
\begin{equation} \label{eq2727SPRFR.050ib}
	g_s(p)\ll (\log p)^{1+\varepsilon}, 
\end{equation} 
where $\varepsilon>0$ is a small number, as $x\to\infty$. 
\end{conj}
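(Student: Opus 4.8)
Because this is a conjecture rather than a settled statement, the honest plan is to prove it conditionally on the Generalized Riemann Hypothesis (GRH) for Dirichlet $L$-functions modulo $p$ and modulo $p^2$, and to pinpoint exactly where the argument stalls short of the conjectured range. The starting point is the single-prime analogue of the counting identity used in the proof of Theorem \ref{thm2727SPRFR.050}. I fix one prime $p\in[x,2x]$, drop the splitting $g=u+v$, and instead sum the stationary characteristic function of Lemma \ref{lem2727SPRFR.100-C4} directly over $g\le y$, writing
\begin{equation*}
	S(y)=\sum_{g\le y}\Psi_s(g)=\tfrac12\sum_{g\le y}\Psi_p(g)+\tfrac12\sum_{g\le y}\Psi_p(g)\Psi_{p^2}(g)=M_p(y)+E_p(y),
\end{equation*}
where $M_p(y)$ is the contribution of the trivial characters and $E_p(y)$ collects all nontrivial characters of orders $d_1\mid\varphi(p)$ and $d_2\mid\varphi(p^2)$. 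A stationary primitive root is guaranteed in $[1,y]$ as soon as $M_p(y)>|E_p(y)|$, so $g_s(p)$ is bounded by the smallest $y$ for which the main term strictly dominates.

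First I would evaluate the main term exactly as in Lemma \ref{lem2727SPRFR.300-M}: the two trivial characters give
\begin{equation*}
	M_p(y)=\frac{\varphi(p-1)}{p-1}\cdot\frac{\varphi(\varphi(p^2))}{\varphi(p^2)}\,y+O(1),
\end{equation*}
and since $\varphi(n)/n\gg 1/\log\log n$ the leading density satisfies $M_p(y)\gg y/(\log\log p)^2$. Thus the entire burden falls on making $E_p(y)$ genuinely smaller than this.

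For the error term I would replace the trivial (or Burgess) bounds used earlier by the sharper conditional estimate: under GRH, every nonprincipal character sum obeys $\sum_{g\le y}\chi(g)\ll y^{1/2}\log p$, and a product $\chi_1\chi_2$ with $\chi_1$ modulo $p$ and $\chi_2$ modulo $p^2$ is again a single character modulo $p^2$, so the same bound applies. Summing over all squarefree divisors, where the characters of order $d$ contribute $\varphi(d)$ copies and cancel the factor $1/\varphi(d)$, produces
\begin{equation*}
	|E_p(y)|\ll 2^{\omega(p-1)+\omega(\varphi(p^2))}\,y^{1/2}\log p\ll 4^{\omega(p-1)}\,y^{1/2}\log p,
\end{equation*}
using $\omega(\varphi(p^2))=\omega(p-1)+1$. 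Comparing with $M_p(y)\gg y/(\log\log p)^2$ forces the main term to win once $y^{1/2}\gg 4^{\omega(p-1)}(\log p)(\log\log p)^2$, that is, once $y\gg 16^{\omega(p-1)}(\log p)^{2+o(1)}$.

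The main obstacle is precisely the factor $4^{\omega(p-1)}$ (equivalently $\sum_{d\mid p-1}|\mu(d)|$ together with its mod-$p^2$ counterpart) that arises from discarding all squarefree divisors simultaneously. Since $\omega(p-1)$ can reach $(1+o(1))\log p/\log\log p$, this factor is only $p^{o(1)}$, so the crude GRH estimate delivers merely $g_s(p)\ll p^{\varepsilon}$, not a power of $\log p$. To descend to the conjectured $(\log p)^{1+\varepsilon}$ one cannot afford an exponential-in-$\omega(p-1)$ loss; the plan is to remove the non-primitive residue classes prime-by-prime, via a sieve and combinatorial refinement in the spirit of Burgess's and Shoup's treatments of the ordinary least primitive root, so that the loss becomes polynomial in $\omega(p-1)$, and then to exploit the near-independence heuristic that a primitive root modulo $p$ fails to lift to $\Z/p^2\Z$ only with probability $\asymp 1/p$ (so $g_s(p)$ and the ordinary least primitive root $g(p)$ should differ negligibly). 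Converting this independence and the equidistribution of the short character sums into a rigorous bound valid all the way down to the polylogarithmic range is exactly the step that lies beyond current technology, and is the reason the statement is recorded as a conjecture rather than a theorem.
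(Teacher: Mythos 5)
The statement you were asked to prove is recorded in the paper as a \emph{conjecture}: the surrounding text explicitly says that ``different techniques are needed to lower the upper bound'' of Theorem \ref{thm2727SPRFR.050} down to the conjectured polylogarithmic range, and no proof is offered. So there is no argument in the paper to compare yours against, and your proposal correctly refrains from claiming a proof. What you give instead is a conditional sketch under GRH, and as a sketch it is essentially sound: the single-prime decomposition $S(y)=M_p(y)+E_p(y)$ is the right analogue of the paper's Lemma \ref{lem2727SPRFR.100-C4} machinery, the lower bound $M_p(y)\gg y/(\log\log p)^2$ is correct (though your displayed formula for $M_p(y)$ drops the contribution $\tfrac12\frac{\varphi(p-1)}{p-1}y$ of the first trivial-character term; this only affects the constant), the character count $\sum_{d\mid p-1}|\mu(d)|=2^{\omega(p-1)}$ together with $\omega(\varphi(p^2))=\omega(p-1)+1$ is right, and your diagnosis that the resulting $4^{\omega(p-1)}$ loss caps the method at $g_s(p)\ll p^{o(1)}$ is the genuine obstruction.

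The one place where your account is slightly too pessimistic, and where a serious attempt at a conditional theorem would start, is the claim that descending below $p^{o(1)}$ ``lies beyond current technology.'' For the ordinary least primitive root, Shoup's sieve refinement of the Wang--Yuan GRH argument already replaces the $2^{\omega(p-1)}$ loss by a polynomial in $\omega(p-1)$ and yields $g(p)\ll \omega(p-1)^{4}(\log\omega(p-1))^{4}(\log p)^{2}$, which is $(\log p)^{6+o(1)}$; adapting that sieve to the stationary condition (the extra constraint $g^{p-1}\not\equiv 1\bmod p^{2}$ of Lemma \ref{lem9977LPFR.300} excludes only about $p$ of the $p^{2}$ residues, so it should cost almost nothing) would plausibly give a conditional polylogarithmic bound, just not with exponent $1+\varepsilon$. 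So the honest gap is narrower than you state: the barrier is not reaching \emph{some} power of $\log p$ under GRH, but reaching the specific exponent $1+\varepsilon$, which indeed remains open even for the classical least primitive root. None of this changes the verdict that the conjecture is unproven by both the paper and your proposal.
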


\section{Random Stationary Primitive Roots and Density of Primes} \label{S2727SPRFR-DP}\hypertarget{S2727SPRFR-DP}
The average density of primes $p\leq x$ with respect to a random primitive root $g(p)\in\Z/p\Z$, where $g\leq z$ and $z\gg e^{(\log x)^c}$, with $c>1/2$, was computed many decades ago in \cite{SP1969}. This section is concerned with the average density of prime numbers $p\in[x,2x]$ with respect to a random primitive root $g(p)\in\Z/p^k\Z$ for all $k\geq1$, where $g\leq z$ and $z\gg x^{c}$ with $c>1/2$. \\

Sine a simultaneous primitive root $g(p)\in\Z/p\Z$ and $g(p)\in\Z/p^2\Z$ is automatically a primitive root in $\Z/p^k\Z$ for all $k\geq1$, it is sufficient to prove the result for simultaneous primitive roots $g(p)\in\Z/p\Z$ and $g(p)\in\Z/p^2\Z$.

The corresponding counting function for the number of primes $p\in [x,2x]$ for which there is a primitive root $g\in [2,2z]$ modulo $p$, that can be lifted to a primitive roots modulo $p^2$, is defined by
\begin{eqnarray} \label{eq2727SPRFR.900i}
	N_2(x,z)	&=&\#\{ p \in [x,2x]: \ord_{p}g=\varphi(p),\;\ord_{p^2}g=\varphi(p^2) \text{ and } g\leq 2z\,\}\nonumber\\[.3cm]
	&=&	\#\{ \,\Psi_{s} (g):x\leq p\leq 2x \text{ and } g\leq 2z \,\},
\end{eqnarray}	
This is equivalent to the cardinality of the set of stationary primitive roots in $\Z/p\Z$, see the definition in \hyperlink{S2727SPRFR.200C}{Definition} \ref{dfn2727SPRFR.200C}.
The main result below answers an open question concerning the cardinality of the subset of integers \eqref{eq2727SPRFR.100d}. A discussion of this topic and the most recent numerical data appears in \cite{PA2009}.

\begin{thm} \label{thm2727SPRFR.900}\hypertarget{thm2727SPRFR.900} Let $x>1$ be a large real number, let $p\in[x,2x]$ be a prime number and let $z=p^{1/2+\varepsilon}\log p$. Then 
	\begin{equation} \label{eq2727SPRFR.900k}
		N_2(x,z)=c_2z^2\cdot\frac{x}{\log x}\left( 1+O\left( \frac{1}{\log x}  \right)  \right), 
	\end{equation} 
	where $c_2>0$ is a small constant,	as $x\to\infty$. 
\end{thm}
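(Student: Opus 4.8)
The plan is to count stationary primitive roots by summing the characteristic function $\Psi_s(g)$ from \hyperlink{lem2727SPRFR.100-C4}{Lemma} \ref{lem2727SPRFR.100-C4} over all primes $p\in[x,2x]$ and all admissible $g\leq 2z$, where I write each $g=u+v$ with $u\in U$ and $v\in V$ for subsets $U,V\subset\Z$ of cardinalities $\#U=\#V=z$. By definition,
\begin{equation}\label{eq-plan-setup}
N_2(x,z)=\sum_{u\in U,\,v\in V}\sum_{x\leq p\leq 2x}\Psi_s(u+v)
=\frac12\sum_{u\in U,\,v\in V}\sum_{x\leq p\leq 2x}\Psi_p(u+v)\bigl(1+\Psi_{p^2}(u+v)\bigr).
\end{equation}
First I would expand each indicator $\Psi_p$ and $\Psi_{p^2}$ into its exponential-sum representation from \hyperlink{lem2727CFFR.100-F}{Lemma} \ref{lem2727CFFR.100-F}, splitting off the trivial-character contribution (the term $d=1$) from the nontrivial one. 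This decomposes the count exactly as in the proof of \hyperlink{thm2727SPRFR.050}{Theorem} \ref{thm2727SPRFR.050}:
\begin{equation}\label{eq-plan-split}
N_2(x,z)=M(x,z_0,z_1)+E(x,z_0,z_1),
\end{equation}
where $M$ collects the main terms attached to $\chi=1$ and $E$ collects the error terms attached to all $\chi\neq1$.

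Second, I would identify the main term directly with the sum estimated in \hyperlink{lem2727SPRFR.300-M}{Lemma} \ref{lem2727SPRFR.300-M}. Since $z_0=z_1=z$, that lemma gives
\begin{equation}\label{eq-plan-main}
M(x,z_0,z_1)=c_2\,z^2\cdot\frac{x}{\log x}\left(1+O\!\left(\frac1{\log x}\right)\right),
\end{equation}
with $c_2=(a_1+a_2)/2$, where the appearance of $a_1$ and $a_2$ comes from applying \hyperlink{lem4040DP.200A}{Lemma} \ref{lem4040DP.200A} with $k=1$ and $k=2$ to the two subsums $M_0$ and $M_1$, using the simplification $\varphi(\varphi(p^2))/p^2=(\varphi(p-1)/(p-1))^2+O(1/p)$. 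This step is essentially bookkeeping once the earlier lemmas are invoked.

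Third, and this is where the real work lies, I would show that the error term is genuinely of lower order than the main term for the chosen range $z=p^{1/2+\varepsilon}\log p$. Applying \hyperlink{lem2727SPRFR.550-E}{Lemma} \ref{lem2727SPRFR.550-E} gives
\begin{equation}\label{eq-plan-error}
E(x,z_0,z_1)\ll\frac{x}{\log x}\cdot p^{1/2+\varepsilon}\cdot\sqrt{z_0 z_1}=\frac{x}{\log x}\cdot p^{1/2+\varepsilon}\cdot z.
\end{equation}
The decisive comparison is then between \eqref{eq-plan-main}, of size $\asymp z^2\,x/\log x$, and \eqref{eq-plan-error}, of size $\ll p^{1/2+\varepsilon}z\,x/\log x$. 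The ratio $E/M$ is $\ll p^{1/2+\varepsilon}/z$, which tends to $0$ precisely because $z=p^{1/2+\varepsilon}\log p$ beats $p^{1/2+\varepsilon}$ by the factor $\log p$. The main obstacle I anticipate is ensuring that this gain is uniform: the Pólya--Vinogradov-type character-sum bound in \hyperlink{lem2727EES.450E}{Lemma} \ref{lem2727EES.450E} must be combined with the divisor bound $2^{\omega(p-1)+\omega(p^2)}\ll p^\varepsilon$ so that the accumulated $p^\varepsilon$ factors are absorbed into a single (slightly larger) $\varepsilon$ without swallowing the saving from $\log p$. Once the error is absorbed into the $O(1/\log x)$ relative error of \eqref{eq-plan-main}, substituting \eqref{eq-plan-main} and \eqref{eq-plan-error} into \eqref{eq-plan-split} yields the claimed asymptotic $N_2(x,z)=c_2 z^2\,(x/\log x)(1+O(1/\log x))$, completing the proof.
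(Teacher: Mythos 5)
Your proposal is correct and follows essentially the same route as the paper: the identical decomposition $N_2=M+E$ via the characteristic function of Lemma \ref{lem2727SPRFR.100-C4}, the main term from Lemma \ref{lem2727SPRFR.300-M}, the error bound from Lemma \ref{lem2727SPRFR.550-E}, and the same comparison $E/M\ll p^{1/2+\varepsilon}/z\asymp 1/\log x$ driven by the choice $z=p^{1/2+\varepsilon}\log p$. The only difference is cosmetic: the paper wraps the computation in a ``nonexistence/contradiction'' framing inherited from the least-primitive-root argument, whereas you compute the asymptotic directly, which is the cleaner presentation of the same argument.
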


\begin{proof}[\textbf{Proof}] Let $U,V\subset \Z$ be a pair of subsets of integers of cardinalities $\#U=\#V=z$, and let $g=u+v$ be a primitive root $\bmod p $, where $u\in U$ and $v\in V$. Suppose that there is no primes $p\in[x,2x]$ and there is no primitive root $g=u+v$ such that the primitive root $g\bmod p $ can be lifted to a primitive root $g\bmod p^2 $. Summing the characteristic function over the domain $[x,2x]\times U\times V$ yields the nonexistence equation
	
	\begin{eqnarray} \label{eq2727SPRFR.900m2}
		N_2(x,z)&=&\frac{1}{2}\sum_{ u\in U, v\in V,} \sum_{x\leq p \leq 2x} \Psi_{p} (u+v)\left( 1+\Psi_{p^2} (u+v)\right)\\[.3cm]
		&=&\frac{1}{2}\sum_{ u\in U, v\in V,} \sum_{x\leq p \leq 2x} \Psi_{p} (u+v) 
		+\frac{1}{2}\sum_{ u\in U, v\in V,} \sum_{x\leq p \leq 2x}\Psi_{p} (u+v)\Psi_{p^2} (u+v)\nonumber\\[.3cm]
		&=&0\nonumber.
	\end{eqnarray}
	Replacing the characteristic functions, \hyperlink{lem2727SPRFR.100-C4}{Lemma} \ref{lem2727SPRFR.100-C4} into the nonexistence equation \eqref{eq2727SPRFR.900m2} and expanding it yield
	
	\begin{eqnarray} \label{eq2727SPRFR.900m4}
		N_2(x,z_0,z_1)&=&\frac{1}{2}\sum_{ u\in U, v\in V,}  \sum_{x\leq p \leq 2x} \Psi_{p} (u+v)\Psi_{p^2} (u+v)  \\[.3cm]
		&=&\frac{1}{2}\sum_{ u\in U, v\in V,}  \sum_{x\leq p \leq 2x} \left (\frac{\varphi(p-1)}{p-1}
		\sum_{d_1\mid \varphi(p)} \frac{\mu(d_1)}{\varphi(d_1)}\sum _{\ord \chi_1 =d_1} \chi_1(u+v) \right )  \nonumber\\[.3cm]
		&&\hskip .5 in+\frac{1}{2}\sum_{ u\in U, v\in V,}  \sum_{x\leq p \leq 2x} \left (\frac{\varphi(p-1)}{p-1}\sum _{d_1\mid \varphi(p)} \frac{\mu(d_1)}{\varphi(d_1)}\sum_{\ord \chi_1 =d_1} \chi_1(u+v) \right )      \nonumber\\[.3cm]
		&&\hskip .5 in \times \left (\frac{\varphi(\varphi(p^2))}{p^2}\sum_{d_2\mid \varphi(p^2)} \frac{\mu(d_2)}{\varphi(d_2)}\sum _{\ord \chi_2 =d_2} \chi_2(u+v) \right ) \nonumber\\[.3cm]
		&=&M(x,z_0,z_1)\; +\; E(x,z_0,z_1),\nonumber
	\end{eqnarray} 
	where $\#U=z_0$ and $\#V=z_1$.\\
	
	An asymptotic formula for the main term $M(x,z_0,z_1)$ is computed in \hyperlink{lem2727SPRFR.300-M}{Lemma} \ref{lem2727SPRFR.300-M} and an upper bound for the error term $E(x,z_0,z_1)$ is computed in \hyperlink{lem2727SPRFR.550-E}{Lemma} \ref{lem2727SPRFR.550-E}. Substituting these estimates yield
	\begin{eqnarray} \label{eq2727SPRFR.900m6}
		N_2(x,z_0,z_1)&=&M(x,z_0,z_1)\; +\; E(x,z_0,z_1)\\[.3cm]
		&=&c_2z_0 z_1\cdot\frac{x}{\log x}\left( 1+O\left(\frac{1 }{\log x}\right)  \right) \;+\;O\left( \frac{x}{\log x} \cdot p^{1/2+\varepsilon}\cdot \sqrt{z_0  z_1 }\right) 	\nonumber\\[.3cm]
		&=&c_2z_0 z_1\cdot\frac{x}{\log x} \left( 1\;+\;O\left(\frac{1}{\sqrt{z_0  z_1 }} \cdot p^{1/2+\varepsilon}\right)  \right) 	\nonumber.
	\end{eqnarray} 
	Setting $z=p^{1/2+\varepsilon}\log p$, $\#U=\#V=z$ and simplifying yield
	\begin{eqnarray} \label{eq2727SPRFR.900m8}
		N_2(x,z)&=&M(x,z)\; +\; E(x,z)\\[.3cm]
		&=&c_2z^2\cdot\frac{x}{\log x} \left( 1\;+\;O\left(\frac{1}{z} \cdot p^{1/2+\varepsilon}\right)  \right) 	\nonumber\\[.3cm].
		&=&c_2z^2\cdot\frac{x}{\log x}\left( 1+O\left( \frac{1}{\log x}  \right)  \right) 	\nonumber\\[.3cm]
		&>&0  \nonumber,
	\end{eqnarray}

	since $z=p^{1/2+\varepsilon}\log p>p^{1/2+\varepsilon}\geq x^{1/2+\varepsilon}$.	\\
	
	Clearly, the inequality \eqref{eq2727SPRFR.900m8} contradicts the hypothesis \eqref{eq2727SPRFR.900m2} for all sufficiently large prime numbers $p \in [x,2x]$. Therefore, on average, a random primitive root $g\in \Z/p\Z$ such that $g\leq 2p^{1/2+2\varepsilon}$ is stationary all the finite rings $\Z/p^k\Z$ for a positive proportion $c_2>0$ of the primes $p\in[x,2x]$ and $k\geq1$. 
\end{proof}

A numerical estimate of the density 
\begin{equation} \label{eq2727SPRFR.900m9}
	c_2<0.26065286200344619944228095665445442967965\ldots
\end{equation}
is outlined in \eqref{eq4040DP.200k2}.

\section{Problems } \label{S2356QRP}
\subsection{Rational expansions problems}
\begin{exe} \label{exe2356QRP.110} {\normalfont  Let $a\ne\pm1, v^2$ be an integer. Prove a (a variant of the Zeremba conjecture) that the continued fraction of $1/p$ is of the form $1/p=[0;a_1,a_2,a_3,\ldots]$, where the partial quotients $a_i\in [0,a-1]$, holds for a positive proportion of the primes $p\in\tP=\{2,3,5,\ldots\}$.
	} 
\end{exe}
\vskip .15 in 
\begin{exe} \label{exe2356QRP.110d} {\normalfont  Prove or disprove that the Erdos-Straus type conjecture for Egyptian fractions $$\frac{4}{p}=\frac{1}{a}+\frac{1}{b}+\frac{1}{c},$$
		where $a,b,c\in\N=\{1,2,3,\ldots\}$, holds for a positive proportion of the primes $p\in\tP=\{2,3,5,\ldots\}$.
	} 
\end{exe}
\vskip .25 in 
\begin{exe} \label{exe2356QRP.110j} {\normalfont  Let $a\ne \pm1,v^2$ be a fixed integer. Prove or disprove that the twisted reciprocal order series $$\sum _{p\geq2}\frac{\mu(p-1)}{p^s\cdot \ord_p a}$$
converges in the complex half plane $\{s=\sigma+it:\Re es\geq 1\}\subset \C$.
	} 
\end{exe}
\vskip .25 in 
\subsection{Prime divisor function problems}
\begin{exe} \label{exe2356QRP.010d} {\normalfont  Let $x\geq 1$ be a large number. Let $\omega(n)=\#\{p\mid n:p\text{ is prime }\}$ and let $\ord_p a$ be the multiplicative order of $a\ne\pm1,v^2$ modulo $p$. Estimate the finite sums 
		$$\sum _{\substack{p\leq x\\\ord_p a=(p-1)/2}}\omega(p-1) \quad\quad \text{ and }\quad\quad\sum _{\substack{p\leq x\\\ord_p a=p-1}}\omega(p-1) .$$
	} 
\end{exe}
\vskip .15 in
\begin{exe} \label{exe2356QRP.010k} {\normalfont  Let $x\geq 1$ be a large number. Let $\omega(n)=\#\{p\mid n:p\text{ is prime }\}$ and let $\mu(n)$ be the Mobius function respectively. Estimate the finite sums
		$$\sum _{\substack{p\leq x\\\mu(p-1)=-1}}\omega(p-1) \quad\quad \text{ and }\quad\quad\sum _{\substack{p\leq x\\\mu(p-1)=1}}\omega(p-1) .$$
	} 
\end{exe}
\vskip .15 in
\begin{exe} \label{exe2356QRP.010j} {\normalfont  Let $x\geq 1$ be a large number, let $q=(\log x)^c$ and let $\omega(n)=\#\{p\mid n:p\text{ is prime }\}$ be the prime divisor counting function. Find an asymptotic formula for 
		$$\sum _{\substack{p\leq x\\p\equiv a \bmod q}}\omega(p-1)\overset{?}{=}a_1\frac{x}{\varphi(q)\log x}+O\left( \frac{x}{(\log x)^b}\right) ,$$
		where $\gcd(a,q)=1$, $a_1=0.37\ldots$ and $b>c+1\geq1$ are constants,
	} 
\end{exe}
\vskip .15 in 

\begin{exe} \label{exe2356QRP.010} {\normalfont  Let $x\geq 1$ be a large number and let $\omega(n)=\#\{p\mid n:p\text{ is prime }\}$ be the prime divisor counting function.  Determine whether or not
		$$\sum _{p\leq x}	2^{\omega(p-1)}\overset{?}{=}c_0\frac{x}{\log x}\log \log x+c_1\frac{x}{\log x}+O\left( \frac{x}{(\log x)^2}\right) ,$$
		where $c_0,c_1>0$ are constants,
	} 
\end{exe}
\vskip .15 in
\begin{exe} \label{exe2356QRP.110f} {\normalfont  Find an estimate for the twisted finite sum $$\sum _{p\leq x}	\mu(p-1)\omega(p-1)$$
as $x\to \infty.$	} 
\end{exe}
\vskip .25 in 
\vskip .15 in 

{\small

}

\end{document}